\title{Reducible Suspensions of Anosov Representations}
\author{Max Lahn}
\DeclareDocumentCommand{\K}{}{ \mathbb{ K } }
\begin{document}

\begin{abstract}

We study through the lens of Anosov representations the dynamical properties of reducible suspensions of linear representations of non-elementary hyperbolic groups, which are linear representations preserving and acting weakly unipotently on a proper non-zero subspace. We characterize when reducible suspensions are discrete and (almost) faithful, quasi-isometrically embedded, and Anosov. Anosov reducible suspensions correspond to points in bounded convex domains in a finite-dimensional real vector space. Stronger characterizations of such domains for symmetric Anosov representations allow us to find deformations of Borel Anosov representations which retain some but not all of the Anosov conditions and to compute examples of non-Anosov limits of Anosov representations.

\end{abstract}

\maketitle

\section{Introduction} \label{sec:introduction}

Hitchin components consist of deformations of irreducibly embedded Fuchsian representations of closed hyperbolic surface groups. These Hitchin representations are Borel Anosov \cite[Theorem~4.1, Theorem~4.2]{Lab06} and therefore discrete, faithful \cite[Theorem~1.5]{Lab06}, and quasi-isometric embeddings \cite[Theorem~5.3]{GW12}. On the other hand, large deformations of \emph{reducibly} embedded Fuchsian representations of hyperbolic surface groups need not be Anosov, discrete, faithful, or quasi-isometrically embedded; the most relevant examples in $ \SL_{ 3 } \of{ \R } $ are due to Barbot \cite{Bar01,Bar10}.

In this article, we generalize these examples of Barbot to arbitrary word hyperbolic groups and specific matrix groups of higher rank, and we investigate the dynamical properties of certain deformations within the space of reducible representations. Specifically, fix a non-elementary word hyperbolic group $ \Gamma $, a field $ \K $, either $ \R $ or $ \C $, and an integer $ d \geq 3 $. We aim to study the dynamical properties of what we call \emph{reducible suspensions} of linear representations of $ \Gamma $, which are those representations $ \Gamma \to \SL_{ d }^{ * } \of{ \K } $ which preserve and act up to projectivization by weakly unipotent transformations of a proper non-zero subspace of $ \K^{ d } $. Here, $ \SL_{ d }^{ * } \of{ \K } $ is the matrix group
\[
\SL_{ d }^{ * } \of{ \K } = \set{ \vect{ A } \in \GL_{ d } \of{ \K } : \abs{ \det \of{ \vect{ A } } } = 1 } ,
\]
which is a smooth fiber bundle over $ \PGL_{ d } \of{ \K } $ with fiber $ \S^{ 0 } = \set{ \pm 1 } $ if $ \K = \R $ and $ \S^{ 1 } $ if $ \K = \C $.

Concretely, a reducible suspension of a representation $ \zeta \colon \Gamma \to \SL_{ q }^{ * } \of{ \K } $ is conjugate to a representation of the form $ \rho = \rho_{ p , q } \of{ \varphi , \xi , \zeta , \kappa } \colon \Gamma \to \SL_{ d }^{ * } \of{ \K } $ defined by the block upper-triangular formula
\[
\rho \of{ \gamma } = \begin{pmatrix} e^{ \frac{ 1 }{ p } \varphi \of{ \gamma } } \xi \of{ \gamma } & \kappa \of{ \gamma } \\ \vect{ 0 } & e^{ -\frac{ 1 }{ q } \varphi \of{ \gamma } } \zeta \of{ \gamma } \end{pmatrix}
\]
for some decomposition $ d = p + q $ with $ p \geq 1 $ and $ q \geq 2 $, homomorphism $ \varphi \colon \Gamma \to \R $, representation $ \xi \colon \Gamma \to U_{ p }^{ * } \of{ \K } $, and map $ \kappa \colon \Gamma \to \K^{ p \times q } $, whose images are considered as $ p \times q $ matrices with entries in $ \K $. Here, $ U_{ p }^{ * } \of{ \K } $ is the set
\[
U_{ p }^{ * } \of{ \K } = \set{ \vect{ A } \in \SL_{ p }^{ * } \of{ \K } : \lambda_{ 1 } \of{ \vect{ A } } = \dotsb = \lambda_{ p } \of{ \vect{ A } } = 1 }
\]
of \emph{weakly unipotent} matrices, those whose complex eigenvalues are all of unit modulus; this is not a group, and so we clarify that by a representation $ \Gamma \to U_{ p }^{ * } \of{ \K } $ we mean a representation $ \Gamma \to \SL_{ p }^{ * } \of{ \K } $ whose image is contained in $ U_{ p }^{ * } \of{ \K } $. In order for the above formula for $ \rho \of{ \gamma } $ to define a homomorphism, we require that $ \kappa \of{ 1 } = \vect{ 0 } $ and that
\[
\kappa \of{ \gamma \delta } = e^{ \frac{ 1 }{ p } \varphi \of{ \gamma } } \xi \of{ \gamma } \kappa \of{ \delta } + e^{ -\frac{ 1 }{ q } \varphi \of{ \delta } } \kappa \of{ \gamma } \zeta \of{ \delta }
\]
for all $ \gamma , \delta \in \Gamma $. We will describe a map $ \kappa \colon \Gamma \to \K^{ p \times q } $ which satisfies these conditions as being \emph{compatible} with $ \varphi $, $ \xi $, and $ \zeta $.

When $ \Gamma $ is a closed surface group, $ \K = \R $, $ d = 3 $, $ p = 1 $, $ q = 2 $, and $ \zeta $ is discrete and faithful, these representations are precisely the \emph{radial representations} as described by Barbot \cite{Bar01,Bar10}, and whether or not the resulting representation is Anosov is determined entirely by $ \varphi $; specifically, those $ \varphi $ which correspond to Anosov radial representations comprise a metric open ball in the stable norm on $ \Homol^{ 1 } \of{ \Gamma , \R } $ \cite[Theorem~4.2]{Bar10}. Our main results are to extend and refine this result for reducible suspensions of Anosov representations of arbitrary hyperbolic groups; even in this more general setting, whether or not a reducible suspension is Anosov is still determined entirely by the homomorphism $ \varphi \colon \Gamma \to \R $. Specifically, for integers $ p \geq 1 $ and $ 1 \leq k \leq \frac{ q }{ 2 } $ and a representation $ \zeta \colon \Gamma \to \SL_{ q }^{ * } \of{ \K } $, we will denote
\[
A^{ p }_{ k } \of{ \zeta } = \set{ \varphi \in \hom \of{ \Gamma , \R } : \begin{matrix} \textrm{$ \rho_{ p , q } \of{ \varphi , \xi , \zeta , \kappa } $ is $ P_{ k } $-Anosov for some (every) $ \xi \colon \Gamma \to U_{ p }^{ * } \of{ \K } $} \\ \textrm{and some (every) compatible $ \kappa \colon \Gamma \to \K^{ p \times q } $} \end{matrix} } .
\]
That $ A^{ p }_{ k } \of{ \zeta } $ is well-defined is a consequence of \autoref{lem:generality}. In fact, $ A^{ p }_{ k } \of{ \zeta } $ is a bounded convex domain in the finite-dimensional vector space $ \hom \of{ \Gamma , \R } $. 

\begin{restatable}{theorem}{convexity} \label{thm:convexity}

$ A^{ p }_{ k } \of{ \zeta } $ is a bounded, convex, and open in $ \hom \of{ \Gamma , \R } $ for any integers $ p \geq 1 $, $ q \geq 2 $, and $ 1 \leq k \leq \frac{ q }{ 2 } $ and representation $ \zeta \colon \Gamma \to \SL_{ q }^{ * } \of{ \K } $.

\end{restatable}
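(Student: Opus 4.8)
The plan is to reduce to an explicit block-diagonal model, translate $P_k$-Anosovness into a pair of linear inequalities on $\varphi$ carrying a uniform linear margin, and then read off all three properties. By \autoref{lem:generality} I may replace $\rho_{p,q}\of{\varphi,\xi,\zeta,\kappa}$ by the most convenient representative: $\kappa \equiv \vect{0}$ is compatible with any data and the trivial $\xi \equiv \vect{I}_{p}$ lands in $U_{p}^{*}\of{\K}$, so $\varphi \in A^{p}_{k}\of{\zeta}$ if and only if the block-diagonal representation
\[
\rho_{0}\of{\gamma} = \begin{pmatrix} e^{ \frac{ 1 }{ p } \varphi \of{ \gamma } } \vect{ I }_{ p } & \vect{ 0 } \\ \vect{ 0 } & e^{ -\frac{ 1 }{ q } \varphi \of{ \gamma } } \zeta \of{ \gamma } \end{pmatrix}
\]
is $P_{k}$-Anosov. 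Since this matrix is block diagonal with scalar top block, its singular values are exactly $e^{ \frac{ 1 }{ p } \varphi \of{ \gamma } }$ with multiplicity $p$, together with $e^{ -\frac{ 1 }{ q } \varphi \of{ \gamma } } \sigma_{j}\of{\zeta\of{\gamma}}$ for $1 \le j \le q$, where $\sigma_{1} \ge \dotsb \ge \sigma_{q}$ are the singular values of $\zeta\of{\gamma}$. Writing $\ell_{j}\of{\gamma} = \log \sigma_{j}\of{\zeta\of{\gamma}}$, the logarithms of the singular values of $\rho_{0}\of{\gamma}$ form the merge of the $p$-fold value $a\of{\gamma} = \frac{1}{p}\varphi\of{\gamma}$ with the values $b_{j}\of{\gamma} = -\frac{1}{q}\varphi\of{\gamma} + \ell_{j}\of{\gamma}$.

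Next I would invoke the singular-value characterization of Anosov representations of word hyperbolic groups (Kapovich--Leeb--Porti, Bochi--Potrie--Sambarino): $\rho_{0}$ is $P_{k}$-Anosov exactly when there are $c, C > 0$ with $\log\sigma_{k}\of{\rho_{0}\of{\gamma}} - \log\sigma_{k+1}\of{\rho_{0}\of{\gamma}} \ge c\abs{\gamma} - C$ for all $\gamma$, and $P_{k}$-Anosov is equivalent to $P_{d-k}$-Anosov via $\gamma \mapsto \gamma^{-1}$. The central computation is that, provided $b_{k}\of{\gamma} > a\of{\gamma}$, the gap at level $k$ equals $\min\of{b_{k}\of{\gamma} - a\of{\gamma},\ \ell_{k}\of{\gamma} - \ell_{k+1}\of{\gamma}}$, because then positions $k$ and $k+1$ are $b_{k}\of{\gamma}$ and $\max\of{a\of{\gamma}, b_{k+1}\of{\gamma}}$. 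Demanding this be $\ge c\abs{\gamma} - C$ for all $\gamma$, and running the same argument at level $d-k$ on $\gamma^{-1}$ (for which $\ell_{k}\of{\gamma^{-1}} = -\ell_{q+1-k}\of{\gamma}$), I expect the clean characterization
\[
A^{p}_{k}\of{\zeta} = \set{ \varphi \in \hom\of{\Gamma,\R} : \exists\, c > 0\ \forall\gamma\in\Gamma,\ \ell_{q+1-k}\of{\gamma} + c\abs{\gamma} \le \tfrac{p+q}{pq}\varphi\of{\gamma} \le \ell_{k}\of{\gamma} - c\abs{\gamma} } ,
\]
valid whenever $\zeta$ is itself $P_{k}$- and $P_{q-k}$-Anosov (and $A^{p}_{k}\of{\zeta} = \emptyset$ otherwise, handling the $\ell_{k} - \ell_{k+1}$ term). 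Note that $k \le \frac{q}{2}$ is precisely what makes $\ell_{q+1-k} \le \ell_{k}$, so the two-sided constraint is consistent.

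All three conclusions follow formally. For convexity, fix $c$: each requirement is, for each $\gamma$, a linear inequality in $\varphi$ (as $\varphi \mapsto \varphi\of{\gamma}$ is linear and the $\ell_{j}\of{\gamma}$ are $\varphi$-independent), so it cuts out an intersection of slabs, hence a convex set, and $A^{p}_{k}\of{\zeta}$ is the increasing union over $c > 0$ of these. For openness, every homomorphism is Lipschitz, $\abs{\varphi\of{\gamma} - \psi\of{\gamma}} \le \lVert \varphi - \psi \rVert_{\ast}\,\abs{\gamma}$ for $\lVert \varphi \rVert_{\ast} = \sup_{\gamma \ne 1}\abs{\varphi\of{\gamma}}/\abs{\gamma}$, so if $\varphi$ satisfies the inequalities with constant $c$ then every nearby $\psi$ satisfies them with $\frac{c}{2}$. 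For boundedness, the two inequalities force $\abs{\varphi\of{\gamma}} \le \frac{pq}{p+q}L\abs{\gamma}$ for all $\gamma$, where $L$ bounds the singular-value growth of $\zeta$ on generators; since $\lVert \cdot \rVert_{\ast}$ is a genuine norm on the finite-dimensional space $\hom\of{\Gamma,\R}$, this confines $A^{p}_{k}\of{\zeta}$ to a fixed ball.

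The main obstacle is justifying the ``expected regime'' uniformly: the formula $\min\of{b_{k} - a,\ \ell_{k} - \ell_{k+1}}$ presupposes $b_{k}\of{\gamma} > a\of{\gamma}$ for all $\gamma$, i.e. that the attracting $k$-plane lies in the $\K^{q}$-block, rather than the competing regime in which the repeated top-block value $a\of{\gamma}$ rises into the top $k$. I would control this through continuity of the Anosov flag maps and connectedness of $\partial\Gamma$, which force $\dim\of{F_{k}\of{\eta} \cap \K^{p}}$ to be constant in $\eta \in \partial\Gamma$, where $F_{k}\of{\eta}$ is the attracting $k$-plane; the inequality $k \le \frac{q}{2}$, fed through the duality $\gamma \leftrightarrow \gamma^{-1}$, rules out this constant being positive (the dual regime would demand $\varphi\of{\gamma}$ large positive for all $\gamma$, impossible since $\varphi\of{\gamma^{-1}} = -\varphi\of{\gamma}$). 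A subtlety to dispatch is that for $p \ge 2$ the top block contributes repeated singular values, so any sequence along which $b_{k}\of{\gamma} - a\of{\gamma}$ fails to grow linearly produces an asymptotic tie at level $k$; this is what upgrades the soft condition $b_{k} > a$ to the quantitative margin $b_{k}\of{\gamma} - a\of{\gamma} \ge c\abs{\gamma}$ in the characterization.
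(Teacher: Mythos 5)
Your overall architecture is close to the paper's: both reduce via \autoref{lem:generality} to the block-diagonal model, read the spectrum of $ \rho \of{ \gamma } $ as the $ p $-fold value $ e^{ \frac{ 1 }{ p } \varphi \of{ \gamma } } $ merged with the rescaled spectrum of $ \zeta \of{ \gamma } $, and extract all three properties from the resulting constraints on $ \varphi $. Your formal deductions are sound and in places a genuine alternative: you get convexity from a nested union of intersections of half-spaces, openness from the Lipschitz bound $ \abs{ \varphi \of{ \gamma } - \psi \of{ \gamma } } \leq \norm{ \varphi - \psi }_{ * } \abs{ \gamma }_{ \Sigma } $, and boundedness from the norm $ \norm{ \oparg }_{ * } $ on the finite-dimensional space $ \hom \of{ \Gamma , \R } $, whereas the paper gets openness from Guichard--Wienhard stability, convexity by interpolating the two linear lower bounds on the gap functions of $ \rho_{ 0 } $ and $ \rho_{ 1 } $, and boundedness from the recession-ray argument.

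However, everything hinges on the configuration lemma you yourself flag as the main obstacle --- that for every infinite order $ \gamma $ the value $ e^{ \frac{ 1 }{ p } \varphi \of{ \gamma } } $ is among neither the $ k $ largest nor the $ k $ smallest spectral values of $ \rho \of{ \gamma } $, so that positions $ k $ and $ k + 1 $ really are $ b_{ k } \of{ \gamma } $ and $ \max \of{ a \of{ \gamma } , b_{ k + 1 } \of{ \gamma } } $ --- and your proposed justification of it fails. You appeal to connectedness of $ \partial \Gamma $ to force $ \dim \of{ F_{ k } \of{ \eta } \cap \K^{ p } } $ to be constant, but $ \Gamma $ is an arbitrary non-elementary hyperbolic group and $ \partial \Gamma $ is totally disconnected when, say, $ \Gamma $ is free; moreover $ \eta \mapsto \dim \of{ F_{ k } \of{ \eta } \cap \K^{ p } } $ is only upper semi-continuous, so even connectedness would not yield constancy. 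The paper's \autoref{prop:eigenvalues} closes this by a different mechanism: $ \lspan_{ \K } \of{ \vect{ e }_{ 1 } , \dotsc , \vect{ e }_{ p } } $ is $ \rho $-invariant and acted on by scalars, so if the bad configuration occurs for a single infinite order $ \gamma $, equivariance of the limit map forces $ \xi_{ \rho }^{ k } \of{ \Gamma \cdot \gamma^{ + } } $ into the closed set $ \set{ V : \lspan_{ \K } \of{ \vect{ e }_{ 1 } , \dotsc , \vect{ e }_{ p } } \subseteq V } $ while $ \xi_{ \rho }^{ k } \of{ \gamma^{ - } } $ lies in the disjoint closed set $ \set{ V : V \subseteq \lspan_{ \K } \of{ \vect{ e }_{ p + 1 } , \dotsc , \vect{ e }_{ p + q } } } $, contradicting density of the orbit $ \Gamma \cdot \gamma^{ + } $ in $ \partial \Gamma $. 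Your duality remark that $ \varphi \of{ \gamma^{ -1 } } = - \varphi \of{ \gamma } $ only rules out the bad regime holding for all $ \gamma $ simultaneously, not for a single stray element, which is exactly what must be excluded. Until an argument of this equivariance-plus-density type is supplied, your slab characterization of $ A^{ p }_{ k } \of{ \zeta } $ --- and hence the whole proof --- is incomplete; once it is supplied, the rest of your argument goes through, since (as in the paper) only the pointwise, margin-free version of the configuration statement is needed, the linear margins being recoverable afterwards from the Anosov gaps themselves.
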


We note as an immediate consequence of \autoref{prop:eigenvalues} that $ A^{ p }_{ k } \of{ \zeta } $ is empty when $ \zeta $ is not $ P_{ k } $-Anosov, and thus \autoref{thm:convexity} is vacuously true in this case. On the other hand, we note that $ A^{ p }_{ k } \of{ \zeta } $ may be empty even if $ \zeta $ is $ P_{ k } $-Anosov.

Drawing motivation from both Hitchin representations and Barbot's radial representations, our main examples of interest will be reducible suspensions of Anosov $ q $-Fuchsian/Kleinian representations, those representations for which $ \zeta = \iota_{ q } \circ \eta $ factors as the composition of an Anosov representation $ \eta \colon \Gamma \to \SL_{ 2 }^{ * } \of{ \K } $ with the irreducible representation $ \iota_{ q } \colon \SL_{ 2 }^{ * } \of{ \K } \into \SL_{ q }^{ * } \of{ \K } $. Along with Anosov representations with images contained in the indefinite orthogonal/unitary group or the symplectic group, these are examples of \emph{symmetric Anosov} representations, for which we are able to precisely determine which reducible suspensions are Anosov.

\begin{restatable}{theorem}{symmetricAnosov} \label{thm:symmetricAnosov}

For any integers $ p \geq 1 $, $ q \geq 2 $, and $ 1 \leq k \leq \frac{ q }{ 2 } $ and any symmetric $ P_{ k } $-Anosov representation $ \zeta \colon \Gamma \to \SL_{ q }^{ * } \of{ \K } $,
\[
A^{ p }_{ k } \of{ \zeta } = \set{ \varphi \in \hom \of{ \Gamma , \R } : \inf_{ \varphi \of{ \gamma } \neq 0 }{ \frac{ \log \of{ \lambda_{ k } \of{ \zeta \of{ \gamma } } } }{ \abs{ \varphi \of{ \gamma } } } } > \frac{ p + q }{ p q } } .
\]

\end{restatable}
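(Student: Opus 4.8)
The plan is to work entirely with the eigenvalues of $ \rho = \rho_{ p , q } \of{ \varphi , \xi , \zeta , \kappa } $, which are especially transparent here: since $ \rho $ is block upper triangular, its spectrum is exactly the union of the spectra of the two diagonal blocks, independently of $ \xi $ and $ \kappa $. As $ \xi $ is weakly unipotent, the top block $ e^{ \frac{ 1 }{ p } \varphi \of{ \gamma } } \xi \of{ \gamma } $ contributes $ p $ eigenvalues all of modulus $ e^{ \frac{ 1 }{ p } \varphi \of{ \gamma } } $, while the bottom block contributes eigenvalues of moduli $ e^{ -\frac{ 1 }{ q } \varphi \of{ \gamma } } \lambda_{ j } \of{ \zeta \of{ \gamma } } $ for $ 1 \leq j \leq q $. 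I would then invoke \autoref{prop:eigenvalues} to characterize membership $ \varphi \in A^{ p }_{ k } \of{ \zeta } $ through a uniform gap between the $ k $th and $ ( k + 1 ) $st largest eigenvalue moduli of $ \rho \of{ \gamma } $; this also makes the independence of $ \xi $ and $ \kappa $ from \autoref{lem:generality} manifest.

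The heart of the argument is an interleaving computation. Passing to log--moduli, the eigenvalue data of $ \rho \of{ \gamma } $ consists of a clump $ \frac{ 1 }{ p } \varphi \of{ \gamma } $ of multiplicity $ p $ together with the shifted quantities $ -\frac{ 1 }{ q } \varphi \of{ \gamma } + \log \lambda_{ j } \of{ \zeta \of{ \gamma } } $, whose $ k $th and $ ( k + 1 ) $st entries are uniformly separated because $ \zeta $ is $ P_{ k } $-Anosov. I would show that, for $ \gamma $ with $ \varphi \of{ \gamma } > 0 $, the gap at position $ k $ of $ \rho $ persists exactly when the $ k $th bottom value exceeds the clump, that is when $ \log \lambda_{ k } \of{ \zeta \of{ \gamma } } > \of{ \frac{ 1 }{ p } + \frac{ 1 }{ q } } \varphi \of{ \gamma } $, and that as this ratio approaches $ \frac{ p + q }{ p q } $ the clump rises to position $ k $ and the gap degenerates. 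Crucially, $ k \leq \frac{ q }{ 2 } $ together with the symmetry of $ \zeta $ gives $ \lambda_{ q + 1 - k } \of{ \zeta \of{ \gamma } } = \lambda_{ k } \of{ \zeta \of{ \gamma } }^{ -1 } $ and hence $ \lambda_{ k } \of{ \zeta \of{ \gamma } } \geq 1 $; for $ \gamma $ with $ \varphi \of{ \gamma } < 0 $ the clump then sits at or below the centre of the reciprocally paired bottom spectrum, so the position-$ k $ gap is never smaller than that of $ \zeta $ and no condition is imposed. Thus the only binding constraints come from $ \varphi \of{ \gamma } > 0 $, yielding $ \inf_{ \varphi \of{ \gamma } > 0 }{ \frac{ \log \lambda_{ k } \of{ \zeta \of{ \gamma } } }{ \varphi \of{ \gamma } } } > \frac{ p + q }{ p q } $.

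To reach the stated symmetric form I would symmetrize this one-sided condition. The same symmetry gives $ \lambda_{ k } \of{ \zeta \of{ \gamma^{ -1 } } } = \lambda_{ k } \of{ \zeta \of{ \gamma }^{ -1 } } = \lambda_{ q + 1 - k } \of{ \zeta \of{ \gamma } }^{ -1 } = \lambda_{ k } \of{ \zeta \of{ \gamma } } $, while $ \varphi \of{ \gamma^{ -1 } } = -\varphi \of{ \gamma } $; hence the ratio $ \frac{ \log \lambda_{ k } \of{ \zeta \of{ \gamma } } }{ \abs{ \varphi \of{ \gamma } } } $ is invariant under $ \gamma \mapsto \gamma^{ -1 } $, and the one-sided infimum over $ \varphi \of{ \gamma } > 0 $ equals the two-sided infimum over all $ \gamma $ with $ \varphi \of{ \gamma } \neq 0 $. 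This is exactly the set in the statement, with the convention $ \inf \emptyset = +\infty $ accounting for $ \varphi \equiv 0 $; the evenness in $ \varphi $ also reflects that $ A^{ p }_{ k } \of{ \zeta } $ is symmetric when $ \zeta $ is symmetric.

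The step I expect to be the main obstacle is the passage between the uniform gap guaranteed by the Anosov property, which is naturally linear in word length, and the scale-invariant infimal ratio in the statement. The reconciliation rests on the fact that symmetry and $ P_{ k } $-Anosovness of $ \zeta $ force $ \log \lambda_{ k } \of{ \zeta \of{ \gamma } } $ to grow at least linearly in word length, since $ 2 \log \lambda_{ k } \of{ \zeta \of{ \gamma } } = \log \lambda_{ k } \of{ \zeta \of{ \gamma } } - \log \lambda_{ q + 1 - k } \of{ \zeta \of{ \gamma } } $ dominates the uniform $ k $th gap of $ \zeta $. Consequently a ratio bounded away from $ \frac{ p + q }{ p q } $ does upgrade to a uniform linear gap, and conversely a near-equality in the ratio must be realised along elements of growing word length, forcing the $ \rho $-gap to vanish sublinearly. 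Making this realisation precise is where I would lean on the hyperbolicity of $ \Gamma $, through a closing-type argument detecting the infimal ratio along periodic conjugacy classes, and on the exact form of \autoref{prop:eigenvalues}; the remaining interleaving bookkeeping is then routine.
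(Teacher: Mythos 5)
Your argument follows the paper's proof in its two main directions: sufficiency is obtained exactly as in the paper, by combining the strict inequality $a > \frac{p+q}{pq}$ with the fact that symmetry upgrades the $P_k$-gap of $\zeta$ to linear growth of $\log \lambda_k(\zeta(\gamma))$ itself (this is \autoref{lem:symmetricAnosov}), so that $\log\lambda_k(\zeta(\gamma)) - \frac{p+q}{pq}\varphi(\gamma) \geq (1 - \frac{p+q}{apq})\log\lambda_k(\zeta(\gamma))$ is linear in $\norm{\gamma}_\Sigma$; and necessity for a strictly subcritical ratio is the symmetrized form of \autoref{prop:eigenvalues} (the paper's \autoref{prop:symmetricAnosov}), which you correctly identify as the device that converts the per-element interleaving picture into an actual obstruction --- your phrase ``the gap persists exactly when the $k$th bottom value exceeds the clump'' is false element-by-element (the clump can sit strictly inside the top $k$ without collapsing the $k$th gap of that one matrix), but since you lean on \autoref{prop:eigenvalues} rather than on that slogan, the argument is sound, and your symmetrization $\gamma \mapsto \gamma^{-1}$ matches the paper's. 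The one genuine divergence is the boundary case $\inf = \frac{p+q}{pq}$: the paper disposes of it by noting that $\rho$ is the limit of the non-Anosov representations $\rho_{p,q}(\frac{n+1}{n}\varphi, \vect{I}_p, \zeta, \vect{0})$ and invoking openness of the Anosov locus (\autoref{thm:stability}), whereas you propose a direct quantitative argument that the $k$th gap is sublinear along a minimizing sequence. Your route can be made to work, but the step you wave at (``a closing-type argument along periodic conjugacy classes'') is exactly the dichotomy you still owe: either the infimum is attained at some infinite-order $\gamma_*$, in which case the \emph{strict} inequality of \autoref{prop:eigenvalues} for infinite-order elements already forbids $P_k$-Anosov, or every minimizing sequence has $\norm{\gamma_n}_\Sigma \to \infty$ (because the ratio is a class function and there are only finitely many conjugacy classes of bounded translation length), and only then does $\abs{\varphi(\gamma_n)}(r_n - \frac{p+q}{pq}) \leq \norm{\varphi}_\infty \norm{\gamma_n}_\Sigma (r_n - \frac{p+q}{pq})$ give sublinearity of the gap. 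The paper proves precisely this dichotomy as claim $(\ast)$ in \autoref{lem:bounds}, for a different purpose; the stability argument buys you the boundary case without it, while your version is more self-contained but needs that lemma spelled out.
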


We are also able to gain more easily computable bounds on the sizes of these convex domains in terms of dynamical data associated to the symmetric Anosov representation $ \zeta $. Fix a finite generating set $ \Sigma $ for $ \Gamma $, and note that such a choice defines a uniform norm $ \norm{ \oparg }_{ \infty } $ on $ \hom \of{ \Gamma , \R } $. We denote the metric open ball of radius $ r $ centered at $ 0 $ by $ B \of{ 0 ,  r } $.

\begin{restatable}{corollary}{bounds} \label{cor:bounds}

For any integers $ p \geq 1 $, $ q \geq 2 $, and $ 1 \leq k \leq \frac{ q }{ 2 } $ and any symmetric $ P_{ k } $-Anosov representation $ \zeta \colon \Gamma \to \SL_{ q }^{ * } \of{ \K } $,
\[
\mB \of{ 0 , \frac{ p q }{ p + q } \cdot s_{ k } \of{ \zeta } } \subseteq A^{ p }_{ k } \of{ \zeta } \subseteq \mB \of{ 0 , \frac{ p q }{ p + q } \cdot \max_{ \sigma \in \Sigma }{ \log \of{ \lambda_{ k } \of{ \zeta \of{ \sigma } } } } } .
\]

\end{restatable}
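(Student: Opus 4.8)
The plan is to derive both inclusions directly from the exact description of $ A^{ p }_{ k } \of{ \zeta } $ in \autoref{thm:symmetricAnosov}, translating the defining condition
\[
\inf_{ \varphi \of{ \gamma } \neq 0 }{ \frac{ \log \of{ \lambda_{ k } \of{ \zeta \of{ \gamma } } } }{ \abs{ \varphi \of{ \gamma } } } } > \frac{ p + q }{ p q }
\]
into comparisons with the uniform norm $ \norm{ \varphi }_{ \infty } = \max_{ \sigma \in \Sigma }{ \abs{ \varphi \of{ \sigma } } } $. The elementary input is that $ \varphi $ is a homomorphism into the torsion-free abelian group $ \R $: writing $ \gamma $ as a word of length $ \abs{ \gamma }_{ \Sigma } $ in $ \Sigma \cup \Sigma^{ -1 } $ and applying the triangle inequality gives $ \abs{ \varphi \of{ \gamma } } \leq \norm{ \varphi }_{ \infty } \abs{ \gamma }_{ \Sigma } $, and replacing $ \gamma $ by $ \gamma^{ n } $ and dividing by $ n $ upgrades this to $ \abs{ \varphi \of{ \gamma } } \leq \norm{ \varphi }_{ \infty } \ell_{ \Sigma } \of{ \gamma } $, where $ \ell_{ \Sigma } $ denotes the stable translation length.

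For the upper inclusion, suppose $ \varphi \in A^{ p }_{ k } \of{ \zeta } $. Evaluating the infimum condition at each generator $ \sigma $ with $ \varphi \of{ \sigma } \neq 0 $ gives $ \abs{ \varphi \of{ \sigma } } < \frac{ p q }{ p + q } \log \of{ \lambda_{ k } \of{ \zeta \of{ \sigma } } } \leq \frac{ p q }{ p + q } \max_{ \sigma' \in \Sigma }{ \log \of{ \lambda_{ k } \of{ \zeta \of{ \sigma' } } } } $, while for $ \sigma $ with $ \varphi \of{ \sigma } = 0 $ this bound is immediate. Since a maximum over the finite set $ \Sigma $ preserves strict inequalities, $ \norm{ \varphi }_{ \infty } < \frac{ p q }{ p + q } \max_{ \sigma \in \Sigma }{ \log \of{ \lambda_{ k } \of{ \zeta \of{ \sigma } } } } $, which is the desired membership in the outer ball. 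This direction uses nothing beyond the definition of $ \norm{ \oparg }_{ \infty } $.

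For the lower inclusion I take $ s_{ k } \of{ \zeta } $ to be the infimum of $ \log \of{ \lambda_{ k } \of{ \zeta \of{ \gamma } } } / \ell_{ \Sigma } \of{ \gamma } $ over infinite-order $ \gamma $, so that $ \log \of{ \lambda_{ k } \of{ \zeta \of{ \gamma } } } \geq s_{ k } \of{ \zeta } \, \ell_{ \Sigma } \of{ \gamma } $ for all such $ \gamma $. Given $ \varphi $ with $ \norm{ \varphi }_{ \infty } < \frac{ p q }{ p + q } s_{ k } \of{ \zeta } $, every $ \gamma $ with $ \varphi \of{ \gamma } \neq 0 $ has infinite order, and combining the two estimates above yields
\[
\frac{ \log \of{ \lambda_{ k } \of{ \zeta \of{ \gamma } } } }{ \abs{ \varphi \of{ \gamma } } } \geq \frac{ s_{ k } \of{ \zeta } \, \ell_{ \Sigma } \of{ \gamma } }{ \norm{ \varphi }_{ \infty } \, \ell_{ \Sigma } \of{ \gamma } } = \frac{ s_{ k } \of{ \zeta } }{ \norm{ \varphi }_{ \infty } } > \frac{ p + q }{ p q } .
\]
As this bound is uniform in $ \gamma $, the infimum defining $ A^{ p }_{ k } \of{ \zeta } $ strictly exceeds $ \frac{ p + q }{ p q } $, so $ \varphi \in A^{ p }_{ k } \of{ \zeta } $ by \autoref{thm:symmetricAnosov}.

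The step I expect to require the most care is the positivity of the dynamical quantities, which is what makes the balls nondegenerate. It is essential that $ s_{ k } \of{ \zeta } $ is measured against the stable length rather than the word length: because $ \log \of{ \lambda_{ k } \of{ \zeta \of{ \oparg } } } $ is a conjugacy invariant while word length is not, the analogous word-length infimum would vanish along conjugates $ w \gamma w^{ -1 } $ with $ \abs{ w }_{ \Sigma } \to \infty $, and the lower bound would be vacuous. With the stable length, positivity of $ s_{ k } \of{ \zeta } $ follows from the $ P_{ k } $-Anosov hypothesis together with the symmetric structure and the constraint $ k \leq \frac{ q }{ 2 } $, which force $ \lambda_{ k } \of{ \zeta \of{ \gamma } } > 1 $ for every infinite-order $ \gamma $ via the reciprocal pairing of eigenvalues and the gap $ \lambda_{ k } > \lambda_{ k + 1 } $; the same fact applied to an infinite-order generator gives $ \max_{ \sigma \in \Sigma }{ \log \of{ \lambda_{ k } \of{ \zeta \of{ \sigma } } } } > 0 $ as soon as $ \hom \of{ \Gamma , \R } \neq 0 $, which is the only case of interest.
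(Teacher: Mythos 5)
Your argument is correct in substance. The outer inclusion is essentially the paper's: \autoref{lem:bounds} evaluates the infimum at a generator realizing $ \norm{ \varphi }_{ \infty } $, while you evaluate it at every generator, which amounts to the same thing. For the inner inclusion you take a genuinely different route. The paper proves the lower bound of \autoref{lem:bounds} by extracting a sequence $ \of{ \gamma_{ n } }_{ n = 0 }^{ \infty } $ realizing the infimum with $ \norm{ \gamma_{ n } }_{ \Sigma } \to \infty $ (which forces a case analysis on whether the infimum is attained, plus a conjugacy-class argument when it is not) and then lets the additive constant $ b $ wash out in the limit. You instead homogenize: against the stable translation length $ \ell_{ \Sigma } $, both $ \log \of{ \lambda_{ k } \of{ \zeta \of{ \gamma } } } $ and $ \abs{ \varphi \of{ \gamma } } $ are exactly, not merely coarsely, linear, so the comparison collapses to a single pointwise inequality with no sequence extraction. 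This is cleaner, but it silently replaces the paper's $ s_{ k } \of{ \zeta } $ (the optimal slope against $ \norm{ \gamma }_{ \Sigma } $ with an additive constant allowed) by $ s' = \inf \log \of{ \lambda_{ k } \of{ \zeta \of{ \gamma } } } / \ell_{ \Sigma } \of{ \gamma } $; to recover the stated radius you need $ s' \geq s_{ k } \of{ \zeta } $, which does follow from the power trick ($ \log \of{ \lambda_{ k } \of{ \zeta \of{ \gamma^{ n } } } } = n \log \of{ \lambda_{ k } \of{ \zeta \of{ \gamma } } } $ together with $ \norm{ \gamma^{ n } }_{ \Sigma } \geq n \ell_{ \Sigma } \of{ \gamma } $), but it should be stated. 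One further imprecision: deducing $ s_{ k } \of{ \zeta } > 0 $ from the pointwise fact $ \lambda_{ k } \of{ \zeta \of{ \gamma } } > 1 $ does not work, since a pointwise strict inequality does not bound an infimum away from zero; the uniform linear growth of \autoref{lem:symmetricAnosov} is what is actually needed there. That said, positivity of $ s_{ k } \of{ \zeta } $ is only required to make the inner ball nonempty, and the pointwise fact applied to a single infinite-order generator is exactly what handles the generators in $ \ker \of{ \varphi } $ in your outer bound, so neither issue undermines the inclusions themselves.
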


In the above statement, $ s_{ k } \of{ \zeta } $ is the optimal slope for the linear growth of $ \log \of{ \lambda_{ k } \of{ \zeta \of{ \gamma } } } $ in terms of the translation length $ \norm{ \gamma }_{ \Sigma } $; this is expounded upon in \autoref{sec:background}.

If a representation $ \eta \colon \Gamma \to \SL_{ 2 }^{ * } \of{ \K } $ is Anosov, then the corresponding $ q $-Fuchsian/Kleinian representation $ \iota_{ q } \circ \eta \colon \Gamma \to \SL_{ q }^{ * } \of{ \K } $ is symmetric Borel Anosov and therefore satisfies the hypotheses of \autoref{thm:symmetricAnosov} for all integers $ 1 \leq k \leq \frac{ q }{ 2 } $ simultaneously. In this case, a formula for the eigenvalues of $ \iota_{ q } \circ \eta $ (see \autoref{ex:symmetricAnosov}) allows us to compare the convex domains established in \autoref{thm:convexity} and \autoref{thm:symmetricAnosov} for different parabolic subgroups.

\begin{restatable}{corollary}{Fuchsian} \label{cor:Fuchsian}

For any Anosov representation $ \eta \colon \Gamma \to \SL_{ 2 }^{ * } \of{ \K } $,
\[
A^{ p }_{ k } \of{ \iota_{ q } \circ \eta } = \set{ \varphi \in \hom \of{ \Gamma , \R } : \inf_{ \varphi \of{ \gamma } \neq 0 }{ \frac{ \log \of{ \lambda_{ 1 } \of{ \eta \of{ \gamma } } } }{ \abs{ \varphi \of{ \gamma } } } } > \frac{ p + q }{ p q \of{ q - 2 k + 1 } } }
\]
for all integers $ p \geq 1 $, $ q \geq 2 $, and $ 1 \leq k \leq \frac{ q }{ 2 } $. Moreover, $ A^{ p }_{ k } \of{ \iota_{ q } \circ \eta } $ is strictly increasing in $ p $ and $ q $ and strictly decreasing in $ k $; that is,
\begin{align*}
A^{ p }_{ k } \of{ \iota_{ q } \circ \eta } & \subsetneq A^{ p + 1 }_{ k } \of{ \iota_{ q } \circ \eta } & A^{ p }_{ k } \of{ \iota_{ q } \circ \eta } & \subsetneq A^{ p }_{ k } \of{ \iota_{ q + 1 } \circ \eta } & A^{ p }_{ k } \of{ \iota_{ q } \circ \eta } & \supsetneq A^{ p }_{ k + 1 } \of{ \iota_{ q } \circ \eta }
\end{align*}

\end{restatable}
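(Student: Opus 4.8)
The plan is to derive both assertions from \autoref{thm:symmetricAnosov}. As observed just before the statement, $ \iota_{ q } \circ \eta $ is symmetric $ P_{ k } $-Anosov whenever $ \eta $ is Anosov, so \autoref{thm:symmetricAnosov} applies for every $ 1 \leq k \leq \frac{ q }{ 2 } $ and identifies $ A^{ p }_{ k } \of{ \iota_{ q } \circ \eta } $ with the set of $ \varphi $ satisfying $ \inf_{ \varphi \of{ \gamma } \neq 0 } \frac{ \log \of{ \lambda_{ k } \of{ \iota_{ q } \circ \eta \of{ \gamma } } } }{ \abs{ \varphi \of{ \gamma } } } > \frac{ p + q }{ p q } $. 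First I would substitute the eigenvalue formula of \autoref{ex:symmetricAnosov}: since $ \abs{ \det \of{ \eta \of{ \gamma } } } = 1 $, the eigenvalue moduli of $ \iota_{ q } \circ \eta \of{ \gamma } $ are $ \lambda_{ 1 } \of{ \eta \of{ \gamma } }^{ q - 1 } , \lambda_{ 1 } \of{ \eta \of{ \gamma } }^{ q - 3 } , \dotsc , \lambda_{ 1 } \of{ \eta \of{ \gamma } }^{ -\of{ q - 1 } } $, so that $ \lambda_{ k } \of{ \iota_{ q } \circ \eta \of{ \gamma } } = \lambda_{ 1 } \of{ \eta \of{ \gamma } }^{ q - 2 k + 1 } $ and hence $ \log \of{ \lambda_{ k } \of{ \iota_{ q } \circ \eta \of{ \gamma } } } = \of{ q - 2 k + 1 } \log \of{ \lambda_{ 1 } \of{ \eta \of{ \gamma } } } $. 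Pulling the positive factor $ q - 2 k + 1 $ out of the infimum and absorbing it into the threshold converts the defining inequality into $ \inf_{ \varphi \of{ \gamma } \neq 0 } \frac{ \log \of{ \lambda_{ 1 } \of{ \eta \of{ \gamma } } } }{ \abs{ \varphi \of{ \gamma } } } > \frac{ p + q }{ p q \of{ q - 2 k + 1 } } $, which is the claimed description.

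For the monotonicity statements, write $ c \of{ p , q , k } = \frac{ p + q }{ p q \of{ q - 2 k + 1 } } $ and $ I \of{ \varphi } = \inf_{ \varphi \of{ \gamma } \neq 0 } \frac{ \log \of{ \lambda_{ 1 } \of{ \eta \of{ \gamma } } } }{ \abs{ \varphi \of{ \gamma } } } $, so that $ A^{ p }_{ k } \of{ \iota_{ q } \circ \eta } = \set{ \varphi : I \of{ \varphi } > c \of{ p , q , k } } $. A larger threshold cuts out a smaller set, so each inclusion reduces to a comparison of constants; for instance $ A^{ p }_{ k } \subseteq A^{ p + 1 }_{ k } $ holds if and only if $ c \of{ p , q , k } \geq c \of{ p + 1 , q , k } $, and similarly for the remaining two, taken among indices for which both domains are defined. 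Writing $ c \of{ p , q , k } = \frac{ 1 }{ q - 2 k + 1 } \of{ \frac{ 1 }{ p } + \frac{ 1 }{ q } } $ makes the dependence on $ p $ and $ k $ transparent: $ c $ is strictly decreasing in $ p $ and strictly increasing in $ k $, giving $ c \of{ p , q , k } > c \of{ p + 1 , q , k } $ and $ c \of{ p , q , k } < c \of{ p , q , k + 1 } $. The dependence on $ q $ is the one genuinely computational point: I would check that $ \frac{ p + q }{ q \of{ q - 2 k + 1 } } $ is strictly decreasing in $ q $ by differentiating, whose numerator simplifies to $ -\of{ q^{ 2 } + 2 p q + p - 2 p k } $, which is negative because $ 2 k \leq q $ forces $ q^{ 2 } + 2 p q + p - 2 p k \geq q^{ 2 } + p q + p > 0 $.

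The remaining task is to upgrade each of these inclusions to a strict one, and this is where I expect the only real subtlety. Since the three constant comparisons above are all strict, it suffices to exhibit, for thresholds $ c_{ 1 } < c_{ 2 } $, a homomorphism $ \varphi $ with $ c_{ 1 } < I \of{ \varphi } \leq c_{ 2 } $. The key observation is that $ I $ is homogeneous of degree $ -1 $, namely $ I \of{ t \varphi } = \frac{ 1 }{ \abs{ t } } I \of{ \varphi } $ for $ t \neq 0 $; thus along any ray through a fixed $ \varphi_{ 0 } $ with $ 0 < I \of{ \varphi_{ 0 } } < \infty $ the value of $ I $ sweeps out all of $ \of{ 0 , \infty } $, so a suitable rescaling lands in $ \of{ c_{ 1 } , c_{ 2 } } $. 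To produce such a $ \varphi_{ 0 } $ I would take any nonzero element of $ \hom \of{ \Gamma , \R } $: finiteness of $ I \of{ \varphi_{ 0 } } $ is immediate from the existence of some $ \gamma $ with $ \varphi_{ 0 } \of{ \gamma } \neq 0 $, while positivity follows from the Anosov lower bound $ \log \of{ \lambda_{ 1 } \of{ \eta \of{ \gamma } } } \geq s_{ 1 } \of{ \eta } \norm{ \gamma }_{ \Sigma } $ with $ s_{ 1 } \of{ \eta } > 0 $ together with the Lipschitz bound $ \abs{ \varphi_{ 0 } \of{ \gamma } } \leq \norm{ \varphi_{ 0 } }_{ \infty } \norm{ \gamma }_{ \Sigma } $, which give $ I \of{ \varphi_{ 0 } } \geq \frac{ s_{ 1 } \of{ \eta } }{ \norm{ \varphi_{ 0 } }_{ \infty } } > 0 $.

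This last argument presumes $ \hom \of{ \Gamma , \R } \neq 0 $; when $ \Gamma $ has trivial abelianization every $ A^{ p }_{ k } \of{ \iota_{ q } \circ \eta } $ collapses to $ \set{ 0 } $ and the strict inclusions degenerate to equalities, so the content of the strictness claim is exactly the case $ b_{ 1 } \of{ \Gamma } \geq 1 $. I therefore view the strictness upgrade, powered by the homogeneity of $ I $ and the existence of a positive-slope homomorphism, as the main conceptual obstacle, with the $ q $-monotonicity being the only place requiring an honest computation.
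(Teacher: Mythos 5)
Your proposal is correct and follows essentially the same route as the paper, whose entire proof is the one-line observation that the corollary follows from \autoref{thm:symmetricAnosov}, the eigenvalue formula $ \lambda_{ k } \of{ \iota_{ q } \of{ \eta \of{ \gamma } } } = \lambda_{ 1 } \of{ \eta \of{ \gamma } }^{ q - 2 k + 1 } $, and the strict monotonicity of the constant $ \frac{ p + q }{ p q \of{ q - 2 k + 1 } } $ in $ p $, $ q $, and $ k $. The one place you go beyond the paper is the strictness of the inclusions: strict monotonicity of the threshold only gives strict containment of sublevel sets once one exhibits a homomorphism whose slope invariant $ I \of{ \varphi } $ lands strictly between the two thresholds, and your rescaling argument via the homogeneity $ I \of{ t \varphi } = \abs{ t }^{ -1 } I \of{ \varphi } $ together with the positivity and finiteness of $ I $ on nonzero homomorphisms (which is exactly the content of \autoref{lem:bounds}) supplies this whenever $ \hom \of{ \Gamma , \R } \neq 0 $. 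Your caveat that the strict inclusions degenerate to equalities when $ \hom \of{ \Gamma , \R } = 0 $ (which can occur, e.g., for convex cocompact groups with finite abelianization such as $ \faktor{ \Z }{ 2 \Z } * \faktor{ \Z }{ 3 \Z } $, or closed hyperbolic rational homology $ 3 $-sphere groups when $ \K = \C $) is a genuine, if minor, gap in the statement as written that the paper's proof does not address.
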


Some of the reducible suspensions of these Anosov $ q $-Fuchsian/Kleinian representations are themselves Anosov, but the boundedness conclusions of \autoref{thm:convexity} and \autoref{cor:bounds} each imply that unlike in the case of Hitchin representations, large deformations of these reducible suspensions need not remain so. In particular, the aforementioned convex domains corresponding to Anosov reducible suspensions are strictly nested, and so we can find deformations of Borel Anosov representations which retain some but not all of the Anosov properties.

\begin{restatable}{corollary}{deformations} \label{cor:deformations}

If $ \Gamma $ admits an Anosov representation in $ \SL_{ 2 }^{ * } \of{ \K } $ and $ d \geq 3 $ is odd, then for all integers $ 1 \leq j \leq \frac{ d }{ 2 } $, there are continuous deformations of a Borel Anosov representation in $ \SL_{ d }^{ * } \of{ \K } $ which are $ P_{ k } $-Anosov for all integers $ 1 \leq k \leq j $ but not $ P_{ k } $-Anosov for any integer $ j < k \leq \frac{ d }{ 2 } $.

\end{restatable}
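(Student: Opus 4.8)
The plan is to realize these deformations entirely among reducible suspensions of a $ q $-Fuchsian/Kleinian representation, exploiting the strict nesting of the domains $ A^{ p }_{ k } $ supplied by \autoref{cor:Fuchsian}. The crucial bookkeeping choice is to take $ p = 1 $ and $ q = d - 1 $: since $ d $ is odd, $ q $ is then even and $ \frac{ q }{ 2 } = \frac{ d - 1 }{ 2 } $ is the largest integer not exceeding $ \frac{ d }{ 2 } $, so the range $ 1 \leq k \leq \frac{ q }{ 2 } $ on which $ A^{ 1 }_{ k } \of{ \zeta } $ is defined is exactly the range of $ k $ that detects the Borel Anosov condition for a representation into $ \SL_{ d }^{ * } \of{ \K } $. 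Fixing an Anosov $ \eta \colon \Gamma \to \SL_{ 2 }^{ * } \of{ \K } $ (which exists by hypothesis) and setting $ \zeta = \iota_{ q } \circ \eta $, I note that $ p = 1 $ forces $ U_{ 1 }^{ * } \of{ \K } = \set{ 1 } $, so that $ \xi $ must be trivial and $ \kappa \equiv \vect{ 0 } $ is compatible for every $ \varphi $. The reducible suspensions then form the explicit, continuously varying block-diagonal family
\[
\rho_{ \varphi } \of{ \gamma } = \begin{pmatrix} e^{ \varphi \of{ \gamma } } & \vect{ 0 } \\ \vect{ 0 } & e^{ -\frac{ 1 }{ q } \varphi \of{ \gamma } } \iota_{ q } \circ \eta \of{ \gamma } \end{pmatrix} ,
\]
and by the well-definedness of the domains (\autoref{lem:generality}), $ \rho_{ \varphi } $ is $ P_{ k } $-Anosov exactly when $ \varphi \in A^{ 1 }_{ k } \of{ \zeta } $.

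Next I would record the two structural facts I need. First, $ \zeta $ is symmetric Borel Anosov, so \autoref{cor:Fuchsian} applies and gives the strict nesting $ A^{ 1 }_{ 1 } \of{ \zeta } \supsetneq \dotsb \supsetneq A^{ 1 }_{ q / 2 } \of{ \zeta } $ of bounded, open, convex domains, each containing the origin (the infimum defining $ A^{ 1 }_{ k } \of{ \zeta } $ is over the empty set at $ \varphi = 0 $, or equivalently $ 0 $ lies in the interior guaranteed by \autoref{cor:bounds}). Second, it is standard that a representation into $ \SL_{ d }^{ * } \of{ \K } $ is $ P_{ k } $-Anosov if and only if it is $ P_{ d - k } $-Anosov, so being $ P_{ k } $-Anosov for all $ 1 \leq k \leq \frac{ q }{ 2 } $ is the same as being Borel Anosov. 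In particular $ \rho_{ 0 } $ is Borel Anosov, and this is the representation I will deform.

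Now fix an integer $ 1 \leq j \leq \frac{ d }{ 2 } $, equivalently $ 1 \leq j \leq \frac{ q }{ 2 } $. The strict inclusion $ A^{ 1 }_{ j + 1 } \of{ \zeta } \subsetneq A^{ 1 }_{ j } \of{ \zeta } $ (interpreting $ A^{ 1 }_{ q / 2 + 1 } \of{ \zeta } = \emptyset $ when $ j = \frac{ q }{ 2 } $) lets me choose $ \varphi^{ * } \in A^{ 1 }_{ j } \of{ \zeta } \setminus A^{ 1 }_{ j + 1 } \of{ \zeta } $, and I will deform along the continuous path $ t \mapsto \rho_{ t \varphi^{ * } } $, $ t \in [ 0 , 1 ] $, which begins at the Borel Anosov representation $ \rho_{ 0 } $. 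Nesting gives $ \varphi^{ * } \in A^{ 1 }_{ k } \of{ \zeta } $ for all $ k \leq j $ and $ \varphi^{ * } \notin A^{ 1 }_{ k } \of{ \zeta } $ for all $ j < k \leq \frac{ q }{ 2 } $, so $ \rho_{ \varphi^{ * } } $ is $ P_{ k } $-Anosov precisely for $ 1 \leq k \leq j $; since $ j < k \leq \frac{ d }{ 2 } $ means exactly $ j + 1 \leq k \leq \frac{ q }{ 2 } $, this is the required profile. Moreover, convexity of $ A^{ 1 }_{ j } \of{ \zeta } $ shows the whole path retains the lower Anosov properties, while convexity of $ A^{ 1 }_{ j + 1 } \of{ \zeta } $ shows it exits that domain at a single time and never returns, so the higher properties are lost monotonically.

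Because \autoref{cor:Fuchsian} and \autoref{thm:convexity} already carry all of the analytic content, I expect no serious obstacle in the estimates; the only delicate points are purely organizational. Specifically, the choice $ p = 1 $ is what aligns the index range of the available domains with the Borel Anosov condition in dimension $ d $, and it is the $ P_{ k } \leftrightarrow P_{ d - k } $ duality that upgrades ``$ P_{ k } $-Anosov for all $ k \leq \frac{ q }{ 2 } $'' to genuine Borel Anosov for the starting representation $ \rho_{ 0 } $. Getting these two index identifications exactly right is the crux of the argument.
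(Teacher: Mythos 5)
Your proposal is correct and follows essentially the same route as the paper: take $ p = 1 $ and $ q = d - 1 $, deform the block-diagonal reducible suspension of $ \iota_{ q } \circ \eta $ along a ray $ t \varphi $ in $ \hom \of{ \Gamma , \R } $, and invoke the strict nesting of the domains $ A^{ 1 }_{ k } \of{ \iota_{ q } \circ \eta } $ from \autoref{cor:Fuchsian} (the paper computes the explicit threshold values of $ t $ at which each $ P_{ k } $ condition fails, rather than merely selecting $ \varphi^{ * } \in A^{ 1 }_{ j } \of{ \iota_{ q } \circ \eta } \setminus A^{ 1 }_{ j + 1 } \of{ \iota_{ q } \circ \eta } $). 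Two immaterial slips: $ U_{ 1 }^{ * } \of{ \K } $ is $ \set{ \pm 1 } $ (resp. $ \S^{ 1 } $), not $ \set{ 1 } $, though \autoref{lem:generality} makes this irrelevant; and the $ P_{ k } \leftrightarrow P_{ d - k } $ duality is not needed, since the paper's Borel Anosov condition already ranges only over integers $ 1 \leq k \leq \frac{ d }{ 2 } $, which coincide with $ 1 \leq k \leq \frac{ q }{ 2 } $ when $ d $ is odd.
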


We will pay particular attention to the reducible suspensions in $ \SL_{ 3 }^{ * } \of{ \K } $ (that is, when $ p = 1 $ and $ q = 2 $), since this setting severely restricts the complexities in the general case. For an example of this, one may note that any reducible representation of $ \Gamma $ in $ \SL_{ 3 }^{ * } \of{ \K } $ is a reducible suspension or its contragredient, and that the nilpotence hypothesis of \autoref{prop:discreteness} are automatically satisfied in this case. From an analysis of these representations, we are able to make conclusions of independent interest.

\begin{restatable}{corollary}{rankTwo} \label{cor:rankTwo}

Any reducible, discrete, and faithful representation of a closed hyperbolic surface group in $ \SL_{ 3 }^{ \pm } \of{ \R } $ or a closed hyperbolic $ 3 $-manifold group in $ \SL_{ 3 }^{ * } \of{ \C } $ is a quasi-isometric embedding.

\end{restatable}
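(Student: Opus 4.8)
The plan is to reduce to a single reducible suspension, to extract from it a discrete faithful representation into $ \SL_{ 2 }^{ * } \of{ \K } $ as a quotient, and then to transport the linear growth of that quotient back to $ \rho $ through the block-triangular structure. By the remark preceding the statement, a reducible $ \rho \colon \Gamma \to \SL_{ 3 }^{ * } \of{ \K } $ preserves a line or a plane, so after replacing $ \rho $ by its contragredient $ \rho^{ * } \of{ \gamma } = \rho \of{ \gamma^{ -1 } }^{ \top } $ if necessary, I may assume it is a reducible suspension $ \rho_{ 1 , 2 } \of{ \varphi , \xi , \zeta , \kappa } $ with $ p = 1 $ and $ q = 2 $. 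This replacement is harmless: the singular values of $ \rho^{ * } \of{ \gamma } $ are the reciprocals, in reverse order, of those of $ \rho \of{ \gamma } $, so the difference $ \log \sigma_{ 1 } - \log \sigma_{ 3 } $ between the largest and smallest logarithmic singular values agrees for $ \rho $ and $ \rho^{ * } $. As recalled in \autoref{sec:background}, a representation into $ \SL_{ 3 }^{ * } \of{ \K } $ is a quasi-isometric embedding if and only if this difference grows linearly, that is $ \log \sigma_{ 1 } \of{ \rho \of{ \gamma } } - \log \sigma_{ 3 } \of{ \rho \of{ \gamma } } \geq c \norm{ \gamma }_{ \Sigma } - C $ for some $ c > 0 $; the matching upper bound is automatic.

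The first and main step is to show that the quotient representation $ \bar\zeta \colon \Gamma \to \PGL_{ 2 } \of{ \K } $ obtained by projectivizing $ \zeta $ is discrete and faithful. For faithfulness, note that $ \ker \bar\zeta $ is a normal subgroup of $ \Gamma $ on which $ \zeta $ is scalar, so that $ \rho $ takes values there in the solvable group of block upper-triangular matrices whose lower $ 2 \times 2 $ block is scalar; since $ \rho $ is faithful, $ \ker \bar\zeta $ is virtually solvable, and a closed hyperbolic surface or $ 3 $-manifold group, being non-elementary hyperbolic and torsion-free, has no nontrivial normal virtually solvable subgroup, so $ \ker \bar\zeta $ is trivial. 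Discreteness is where the special features of $ \SL_{ 3 } $ are essential: because $ p = 1 $ and $ q = 2 $, the nilpotence hypotheses of \autoref{prop:discreteness} hold automatically, and the resulting characterization of discreteness and almost-faithfulness forces $ \zeta $ to be discrete and almost faithful as soon as $ \rho $ is; torsion-freeness of $ \Gamma $ upgrades almost faithful to faithful. I expect this extraction to be the principal obstacle, precisely because in higher rank the homomorphism $ \varphi $ can a priori compensate for a failure of $ \zeta $ to be discrete, and it is the low-rank hypothesis, through \autoref{prop:discreteness}, that excludes this.

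With $ \bar\zeta $ discrete and faithful, rank-one rigidity identifies its image with a cocompact lattice: in the real case a discrete faithful representation of a closed surface group into $ \PGL_{ 2 } \of{ \R } $ is Fuchsian, and its quotient, carrying the fundamental group of a closed surface, is a closed hyperbolic surface; in the complex case Mostow--Prasad rigidity identifies a discrete faithful representation of a closed hyperbolic $ 3 $-manifold group into $ \PGL_{ 2 } \of{ \C } $ with the cocompact holonomy of a closed hyperbolic $ 3 $-manifold. In either case the Milnor--\v{S}varc lemma yields $ \log \sigma_{ 1 } \of{ \zeta \of{ \gamma } } = \tfrac{ 1 }{ 2 } d_{ \mathbb{ H } } \of{ o , \bar\zeta \of{ \gamma } o } \geq c' \norm{ \gamma }_{ \Sigma } - C' $ for some $ c' > 0 $. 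To finish, I transport this bound across the block structure: writing $ a = \varphi \of{ \gamma } $ and $ s = \log \sigma_{ 1 } \of{ \zeta \of{ \gamma } } \geq 0 $, the operator norm of a matrix dominates that of any submatrix, so comparing $ \rho \of{ \gamma } $ and $ \rho \of{ \gamma }^{ -1 } $ with their diagonal blocks gives $ \log \sigma_{ 1 } \of{ \rho \of{ \gamma } } \geq \max \of{ a , s - \tfrac{ a }{ 2 } } $ and $ \log \sigma_{ 1 } \of{ \rho \of{ \gamma }^{ -1 } } \geq \max \of{ -a , s + \tfrac{ a }{ 2 } } $, independently of $ \kappa $. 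Since $ \sigma_{ 3 } \of{ \rho \of{ \gamma } }^{ -1 } = \sigma_{ 1 } \of{ \rho \of{ \gamma }^{ -1 } } $, adding these and checking the two sign cases of $ a $ yields $ \log \sigma_{ 1 } \of{ \rho \of{ \gamma } } - \log \sigma_{ 3 } \of{ \rho \of{ \gamma } } \geq 2 s \geq 2 c' \norm{ \gamma }_{ \Sigma } - 2 C' $, which is the required linear lower bound, so $ \rho $ is a quasi-isometric embedding.
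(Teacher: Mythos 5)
Your proposal is correct and follows essentially the same route as the paper: reduce to a reducible suspension with $ p = 1 $, $ q = 2 $ by passing to the contragredient if necessary, use the automatic nilpotence of $ \xi \of{ \Gamma } $ together with \autoref{prop:discreteness}/\autoref{cor:discreteness} to deduce that $ \zeta $ is discrete and faithful, and then transport the quasi-isometric embedding property from $ \zeta $ back to $ \rho $. The only differences are cosmetic: your virtual-solvability argument for faithfulness of the projectivized $ \zeta $ duplicates what \autoref{cor:discreteness}(2) already gives (a torsion-free non-elementary hyperbolic group is centerless), and your explicit singular-value estimate $ \log \of{ \sigma_{ 1 } \of{ \rho \of{ \gamma } } / \sigma_{ 3 } \of{ \rho \of{ \gamma } } } \geq 2 \log \of{ \sigma_{ 1 } \of{ \zeta \of{ \gamma } } } $ is a correct hands-on substitute for the paper's \autoref{prop:displacement}, which performs the analogous comparison via translation lengths on symmetric spaces and the displacing criterion of Delzant--Guichard--Labourie--Mozes.
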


We expect that the conclusions of \autoref{cor:rankTwo} do not hold for irreducible representations.

\subsection*{Acknowledgements} The author gratefully acknowledges the support of NSF grant DMS-1906441, and the personal and professional support of Dick Canary, Teddy Weisman, and Pierre-Louis Blayac, without whom this article would not have been possible.

\section{Background and Notation} \label{sec:background}

Originally introduced by Labourie \cite[Section~3.1.2]{Lab06}, Anosov representations provide a general framework through which many of the desirable properties of convex-cocompact discrete subgroups of rank one Lie groups may be recovered in the more general setting of discrete subgroups of Lie groups of arbitrary rank. Anosov representations encompass the monodromy representations of convex-cocompact real and complex hyperbolic manifolds, Hitchin representations of closed hyperbolic surface groups, maximal representations, and the monodromy representations of closed strictly convex projective manifolds, to name a few examples. 

We recall that throughout this article, $ \Gamma $ is assumed to be a non-elementary word hyperbolic group, and we begin by making the following preliminary definitions. To each finite generating set $ \Sigma $ for $ \Gamma $ is associated a \emph{word length} function $ \abs{ \oparg }_{ \Sigma } \colon \Gamma \to \N $ defined by the formula
\[
\abs{ \gamma }_{ \Sigma } = \inf \set{ n \in \N : \gamma = \sigma_{ 1 } \dotsm \sigma_{ n } \textrm{ for some } \sigma_{ 1 } , \dotsc , \sigma_{ n } \in \Sigma \cup \Sigma^{ -1 } }
\]
and a \emph{translation length} function $ \norm{ \oparg }_{ \Sigma } \colon \Gamma \to \N $ defined by the formula $ \norm{ \gamma }_{ \Sigma } = \inf_{ \delta \in \Gamma }{ \abs{ \delta \gamma \delta^{ -1 } }_{ \Sigma } } $. Moreover, to each matrix $ \vect{ A } \in \SL_{ d }^{ * } \of{ \K } $ we associate $ d $ \emph{eigenvalue magnitudes}
\[
\lambda_{ 1 } \of{ \vect{ A } } \geq \lambda_{ 2 } \of{ \vect{ A } } \geq \dotsb \geq \lambda_{ d } \of{ \vect{ A } } .
\]
These are the magnitudes of the complex eigenvalues of $ \vect{ A } $, counted with algebraic multiplicity and listed in non-increasing order. Finally, given two functions $ f , g \colon \Gamma \to \R $, we will say that $ f \of{ \gamma } $ \emph{grows at least linearly} in $ g \of{ \gamma } $ if there are constants $ a > 0 $ and $ b \geq 0 $ so that
\[
f \of{ \gamma } \geq a g \of{ \gamma } - b
\]
for all $ \gamma \in \Gamma $. With this terminology and the above notation, we can define Anosov representations as follows:

\begin{definition+}

For an integer $ 1 \leq k \leq \frac{ d }{ 2 } $, a representation $ \rho \colon \Gamma \to \GL_{ d } \of{ \K } $ is called \emph{$ P_{ k } $-Anosov} if the $ k $th logarithmic eigenvalue gap $ \log \of{ \frac{ \lambda_{ k } \of{ \rho \of{ \gamma } } }{ \lambda_{ k + 1 } \of{ \rho \of{ \gamma } } } } $ grows at least linearly in the translation length $ \norm{ \gamma }_{ \Sigma } $; that is, $ \rho $ is $ P_{ k } $-Anosov if there are constants $ a > 0 $ and $ b \geq 0 $ so that
\[
\log \of{ \frac{ \lambda_{ k } \of{ \rho \of{ \gamma } } }{ \lambda_{ k + 1 } \of{ \rho \of{ \gamma } } } } \geq a \norm{ \gamma }_{ \Sigma } - b
\]
for all $ \gamma \in \Gamma $. A representation $ \rho \colon \Gamma \to \GL_{ d } \of{ \K } $ is called \emph{Anosov} if it is $ P_{ k } $-Anosov for some integer $ 1 \leq k \leq \frac{ d }{ 2 } $, and $ \rho $ is called \emph{Borel Anosov} if it is $ P_{ k } $-Anosov for all integers $ 1 \leq k \leq \frac{ d }{ 2 } $.

\end{definition+}

We note that a different choice $ \Sigma' $ of finite generating set induces different translation lengths $ \norm{ \oparg }_{ \Sigma' } $ but not a different notion of $ P_{ k } $-Anosov, since each of $ \norm{ \gamma }_{ \Sigma } $ and $ \norm{ \gamma }_{ \Sigma' } $ grows at least linearly in the other.

\begin{remark+}

In the setting of representations of $ \Gamma $ in the reductive Lie group $ \GL_{ d } \of{ \K } $, the above characterization of Anosov representations is due to Kassel--Potrie \cite[Corollary~4.6]{KP22}, and it is equivalent both to the original definition given by Labourie \cite[Section~3.1.2]{Lab06} in the case of closed hyperbolic surface groups and to several subsequent characterizations by Guichard--Wienhard \cite[Definition~2.10, Proposition~3.16]{GW12}, Kapovich--Leeb--Porti \cite[Equivalence~Theorem~1.1]{KLP17} \cite[Theorem~1.5, Corollary~1.6]{KLP18}, Gu\'{e}ritaud--Guichard--Kassel--Wienhard \cite[Theorem~1.3, Theorem~1.7]{Gue17}, Bochi--Potrie--Sambarino \cite[Proposition~4.5, Proposition~4.9]{BPS19}, and Tsouvalas \cite[Theorem~1.1]{Tso22}.

\end{remark+}

A $ P_{ k } $-Anosov representation $ \rho \colon \Gamma \to \GL_{ d } \of{ \K } $ comes equipped with an \emph{Anosov limit map} $ \xi_{ \rho }^{ k } \colon \partial \Gamma \to \Gr_{ k } \of{ \K^{ d } } $ on the Gromov boundary \cite[Definition~2.10]{GW12}. This continuous and $ \rho $-equivariant map is \emph{dynamics-preserving}; that is, for any infinite order $ \gamma \in \Gamma $, $ \xi_{ \rho }^{ k } \of{ \gamma^{ + } } $ is the unique attracting fixed point $ \rho \of{ \gamma }^{ + }_{ k } $ for the action of $ \rho \of{ \gamma } $ on the Grassmannian $ \Gr_{ k } \of{ \K^{ d } } $ \cite[Lemma~3.1]{GW12}.

The $ P_{ k } $-Anosov condition is also well-known for being \emph{stable}; that is, any small deformation of a $ P_{ k } $-Anosov representation is also $ P_{ k } $-Anosov.

\begin{theorem+}[{\cite[Theorem~1.2]{GW12}}] \label{thm:stability}

The set of $ P_{ k } $-Anosov representations $ \Gamma \to \GL_{ d } \of{ \K } $ is open in the representation variety $ \hom \of{ \Gamma , \GL_{ d } \of{ \K } } $.

\end{theorem+}

We will call a representation $ \rho \colon \Gamma \to \GL_{ d } \of{ \K } $ \emph{symmetric $ P_{ k } $-Anosov} if it is $ P_{ k } $-Anosov and $ \lambda_{ k } \of{ \rho \of{ \gamma } } = \lambda_{ k } \of{ \rho \of{ \gamma^{ -1 } } } $ for all $ \gamma \in \Gamma $. Note that this automatically implies also that
\[
\lambda_{ d - k + 1 } \of{ \rho \of{ \gamma } } = \frac{ 1 }{ \lambda_{ k } \of{ \rho \of{ \gamma^{ -1 } } } } = \frac{ 1 }{ \lambda_{ k } \of{ \rho \of{ \gamma } } } = \lambda_{ d - k + 1 } \of{ \rho \of{ \gamma^{ -1 } } }
\]
for all $ \gamma \in \Gamma $.

\begin{examples+} \label{ex:symmetricAnosov}

The following are examples of symmetric Anosov representations:
\begin{itemize}

\item

Let $ \eta \colon \Gamma \to \SL_{ 2 }^{ * } \of{ \K } $ be a Fuchsian/Kleinian representation. The composition $ \iota_{ q } \circ \eta \colon \Gamma \to \SL_{ q }^{ * } \of{ \K } $ with an irreducible representation $ \iota_{ q } \colon \SL_{ 2 }^{ * } \of{ \K } \into \SL_{ q }^{ * } \of{ \K } $ is called \emph{$ q $-Fuchsian} or \emph{$ q $-Kleinian} (depending on whether $ \K = \R $ or $ \K = \C $). If $ \eta $ is Anosov, then $ \iota_{ q } \circ \eta $ is symmetric Borel Anosov, since
\[
\lambda_{ k } \of{ \iota_{ q } \of{ \eta \of{ \gamma } } } = \lambda_{ 1 } \of{ \eta \of{ \gamma } }^{ q - 2 k + 1 }
\]
for all integers $ 1 \leq k \leq q $.

\item 

Fix a matrix $ \Omega \in \GL_{ q } \of{ \K } $, and consider the matrix group
\[
G_{ \Omega } \of{ \K } = \set{ \vect{ A } \in \GL_{ q } \of{ \K } : \Omega \vect{ A } \Omega^{ -1 } = \contra{ \vect{ A } } } .
\]
Then any $ P_{ k } $-Anosov representation $ \zeta \colon \Gamma \to \GL_{ q } \of{ \K } $ with image contained in $ G_{ \Omega } \of{ \K } $ is symmetric $ P_{ k } $-Anosov, since
\[
\lambda_{ k } \of{ \vect{ A } } = \lambda_{ k } \of{ \trans{ \vect{ A } } } = \frac{ 1 }{ \lambda_{ q - k } \of{ \contra{ \vect{ A } } } } = \frac{ 1 }{ \lambda_{ q - k } \of{ \Omega \vect{ A } \Omega^{ -1 } } } = \frac{ 1 }{ \lambda_{ q - k } \of{ \vect{ A } } } = \lambda_{ k } \of{ \vect{ A }^{ -1 } }
\]
for all $ \vect{ A } \in G_{ \Omega } \of{ \K } $. Particular choices of $ \Omega $ imply that an Anosov representation with image contained in the indefinite orthogonal group $ \O \of{ r , s } $, the indefinite unitary group $ \U \of{ r , s } $, or the symplectic group $ \Sp_{ q } \of{ \K } $ is symmetric Anosov.

We also note that this class of examples subsumes the previous, since by work of McGarraghy \cite{McG05}, $ q $-Fuchsian/Kleinian representations have image contained in $ \Sp_{ q } \of{ \K } $ when $ q $ is even and $ \SU \of{ \frac{ q - 1 }{ 2 } , \frac{ q + 1 }{ 2 } } $ when $ q $ is odd.

\end{itemize}

\end{examples+}

\begin{lemma+} \label{lem:symmetricAnosov}

If a representation $ \rho \colon \Gamma \to \GL_{ d } \of{ \K } $ is symmetric $ P_{ k } $-Anosov, then $ \log \of{ \lambda_{ k } \of{ \rho \of{ \gamma } } } $ grows at least linearly in $ \norm{ \gamma }_{ \Sigma } $.

\begin{proof}

Note that
\[
\log \of{ \lambda_{ k } \of{ \rho \of{ \gamma } } } = \frac{ 1 }{ 2 } \log \of{ \lambda_{ k } \of{ \rho \of{ \gamma } }^{ 2 } } = \frac{ 1 }{ 2 } \log \of{ \frac{ \lambda_{ k } \of{ \rho \of{ \gamma } } }{ \lambda_{ d - k + 1 } \of{ \rho \of{ \gamma } } } } \geq \frac{ 1 }{ 2 } \log \of{ \frac{ \lambda_{ k } \of{ \rho \of{ \gamma } } }{ \lambda_{ k + 1 } \of{ \rho \of{ \gamma } } } } 
\]
for all $ \gamma \in \Gamma $. Since $ \rho $ is $ P_{ k } $-Anosov, the last of these expressions grows at least linearly in $ \norm{ \gamma }_{ \Sigma } $. \qedhere

\end{proof}

\end{lemma+}

For such a symmetric $ P_{ k } $-Anosov representation $ \rho \colon \Gamma \to \GL_{ d } \of{ \K } $, we will denote the optimal slope for the linear growth of $ \log \of{ \lambda_{ k } \of{ \rho \of{ \gamma } } } $ in terms of $ \norm{ \gamma }_{ \Sigma } $ by $ s_{ k } \of{ \rho } $; that is,
\[
s_{ k } \of{ \rho } = \sup \set{ a \in \opint{ 0 }{ \infty } : \textrm{There is some } b \geq 0 \textrm{ so that } \log \of{ \lambda_{ k } \of{ \rho \of{ \gamma } } } \geq a \norm{ \gamma }_{ \Sigma } - b \textrm{ for all } \gamma \in \Gamma } .
\]

Finally, we end this section with a useful lemma about commutators in non-elementary hyperbolic groups.

\begin{lemma+} \label{lem:commutators}

Fix a sequence $ \of{ \gamma_{ n } }_{ n = 0 }^{ \infty } $ in a non-elementary hyperbolic group $ \Gamma $. If $ \gamma_{ n } $ is not eventually central, then there is some $ \delta \in \Gamma $ so that $ \comm{ \gamma_{ n } }{ \delta } $ is not eventually $ 1 $.

\begin{proof}

Note that we may without loss of generality pass to subsequences. Since $ \Gamma $ acts on its Gromov boundary $ \partial \Gamma $ as a convergence group, we may pass to a subsequence so that either $ \of{ \gamma_{ n } }_{ n = 0 }^{ \infty } $ is eventually constant or there are ideal points $ a , b \in \partial \Gamma $ so that $ \of{ \gamma_{ n } }_{ n = 0 }^{ \infty } $ converges uniformly to $ a $ on compact subsets of $ \partial \Gamma \setminus \set{ b } $.

First, suppose that $ \of{ \gamma_{ n } }_{ n = 0 }^{ \infty } $ is eventually constant; that is, there is some $ \gamma \in \Gamma $ so that $ \gamma_{ n } = \gamma $ for sufficiently large $ n \in \N $. Since $ \of{ \gamma_{ n } }_{ n = 0 }^{ \infty } $ is not eventually central, $ \gamma $ is not central, and thus $ \comm{ \gamma }{ \delta } \neq 1 $ for some $ \delta \in \Gamma $. In particular, $ \comm{ \gamma_{ n } }{ \delta } $ is eventually not $ 1 $.

On the other hand, suppose that there are ideal points $ a , b \in \partial \Gamma $ so that $ \of{ \gamma_{ n } }_{ n = 0 }^{ \infty } $ converges uniformly to $ a $ on compact subsets of $ \partial \Gamma \setminus \set{ b } $. Since $ \Gamma $ is non-elementary, there is an infinite order $ \delta \in \Gamma $ so that $ \set{ a , b } \cap \set{ \delta^{ + } , \delta^{ - } } = \emptyset $. Suppose for a contradiction that $ \comm{ \gamma_{ n } }{ \delta } = 1 $ for infinitely many $ n \in \N $. Then we may pass to yet another subsequence so that $ \comm{ \gamma_{ n } }{ \delta } = 1 $ for all $ n \in \N $, which implies that
\[
a = \lim_{ n \to \infty }{ \gamma_{ n } \cdot \delta^{ + } } = \lim_{ n \to \infty }{ \of{ \gamma_{ n } \delta \gamma_{ n }^{ -1 } }^{ + } } = \lim_{ n \to \infty }{ \delta^{ + } } = \delta^{ + } \neq a .
\]
This contradiction implies that $ \comm{ \gamma_{ n } }{ \delta } $ is eventually not $ 1 $. \qedhere

\end{proof}

\end{lemma+}

\section{Discreteness and Quasi-Isometric Embeddings} \label{sec:discreteness}

In this section, we will investigate when a reducible suspension is discrete and faithful and when it is a quasi-isometric embedding. To that end, fix for this section a homomorphism $ \varphi \colon \Gamma \to \R $, representations $ \xi \colon \Gamma \to U_{ p }^{ * } \of{ \K } $ and $ \zeta \colon \Gamma \to \SL_{ q }^{ * } \of{ \K } $, and a compatible map $ \kappa \colon \Gamma \to \K^{ p \times q } $, and consider the reducible suspension $ \rho = \rho_{ p , q } \of{ \varphi , \xi , \zeta , \kappa } $. We first aim to characterize when $ \rho $ is discrete and faithful in terms of when $ \zeta $ is.

\begin{proposition+} \label{prop:discreteness}

\begin{enumerate}

\item[1.]

If $ \zeta $ is discrete and faithful (resp. has finite kernel), then $ \rho $ is discrete and faithful (resp. has finite kernel);

\item[2.]

If $ \Gamma $ is centerless, $ \xi \of{ \Gamma } $ is nilpotent, and $ \rho $ is discrete and faithful, then $ \zeta $ is discrete and faithful; and

\item[3.]

If $ \xi \of{ \Gamma } $ is nilpotent and $ \rho $ is discrete and has finite kernel, then $ \zeta $ is discrete and has finite kernel.

\end{enumerate}

\begin{proof}

\begin{enumerate}

\item[1.]

Consider a sequence $ \of{ \gamma_{ n } }_{ n = 0 }^{ \infty } $ in $ \Gamma $ so that $ \lim_{ n \to \infty }{ \rho \of{ \gamma_{ n } } } = \vect{ I }_{ p + q } $. The above formula for $ \rho $ implies that $ \lim_{ n \to \infty }{ e^{ -\frac{ 1 }{ q } \varphi \of{ \gamma_{ n } } } \zeta \of{ \gamma_{ n } } } = \vect{ I }_{ q } $, so that
\begin{align*}
\lim_{ n \to \infty }{ e^{ \frac{ 1 }{ q } \varphi \of{ \gamma_{ n } } } } & = \of{ \lim_{ n \to \infty }{ e^{ -\varphi \of{ \gamma_{ n } } } } }^{ -\frac{ 1 }{ q } } = \of{ \lim_{ n \to \infty }{ \abs{ \det \of{ e^{ -\frac{ 1 }{ q } \varphi \of{ \gamma_{ n } } } \zeta \of{ \gamma_{ n } } } } } }^{ -\frac{ 1 }{ q } } \\
& = \abs{ \det \of{ \lim_{ n \to \infty }{ e^{ -\frac{ 1 }{ q } \varphi \of{ \gamma_{ n } } } \zeta \of{ \gamma_{ n } } } } }^{ -\frac{ 1 }{ q } } = \abs{ \det \of{ \vect{ I }_{ q } } }^{ -\frac{ 1 }{ q } } = 1 .
\end{align*}
In particular,
\[
\lim_{ n \to \infty }{ \zeta \of{ \gamma_{ n } } } = \of{ \lim_{ n \to \infty }{ e^{ \frac{ 1 }{ q } \varphi \of{ \gamma_{ n } } } } } \of{ \lim_{ n \to \infty }{ e^{ -\frac{ 1 }{ q } \varphi \of{ \gamma_{ n } } } \zeta \of{ \gamma_{ n } } } } = 1 \cdot \vect{ I }_{ q } = \vect{ I }_{ q } .
\]
If $ \zeta $ is discrete, this implies that $ \gamma_{ n } \in \ker \of{ \zeta } $ for all but finitely many $ n \in \N $. If $ \ker \of{ \zeta } $ is finite, this further implies that $ \rho $ is discrete and $ \ker \of{ \rho } \subseteq \ker \of{ \zeta } $ is finite. The analogous result for discrete and faithful representations follows by taking $ \ker \of{ \zeta } = \set{ 1 } $.

\item[2.]

For each $ n \in \N $, let $ \vect{ A }_{ n } = e^{ -\frac{ n }{ p } } \vect{ I }_{ p } \oplus e^{ \frac{ n }{ q } } \vect{ I }_{ q } $, and consider the representation $ \rho_{ n } \colon \Gamma \to \SL_{ p + q }^{ * } \of{ \K } $ defined by the formula
\[
\rho_{ n } \of{ \gamma } = \vect{ A }_{ n } \rho \of{ \gamma } \vect{ A }_{ n } = \begin{pmatrix} e^{ \frac{ 1 }{ p } \varphi \of{ \gamma } } \xi \of{ \gamma } & e^{ -\of{ \frac{ p + q }{ p q } } n } \kappa \of{ \gamma } \\ \vect{ 0 } & e^{ -\frac{ 1 }{ q } \varphi \of{ \gamma } } \zeta \of{ \gamma } \end{pmatrix} .
\]
Then the sequence $ \of{ \rho_{ n } }_{ n = 0 }^{ \infty } $ converges pointwise to the representation $ \rho' \colon \Gamma \to \SL_{ p + q }^{ * } \of{ \K } $ defined by the formula
\[
\rho' \of{ \gamma } = e^{ \frac{ 1 }{ p } \varphi \of{ \gamma } } \xi \of{ \gamma } \oplus e^{ -\frac{ 1 }{ q } \varphi \of{ \gamma } } \zeta \of{ \gamma } = \begin{pmatrix} e^{ \frac{ 1 }{ p } \varphi \of{ \gamma } } \xi \of{ \gamma } & \vect{ 0 } \\ \vect{ 0 } & e^{ -\frac{ 1 }{ q } \varphi \of{ \gamma } } \zeta \of{ \gamma } \end{pmatrix} .
\]
Note that $ \rho' = \rho_{ p , q } \of{ \varphi , \xi , \zeta , \vect{ 0 } } $; that is, $ \rho' = \rho $ in the case that $ \kappa \of{ \gamma } = \vect{ 0 } $ for all $ \gamma \in \Gamma $. Furthermore, since each $ \rho_{ n } $ is conjugate to $ \rho = \rho_{ 0 } $, it is discrete and faithful, and given that $ \Gamma $ is a non-elementary word hyperbolic group, the limit $ \rho' $ is discrete and faithful by a well-known consequence \cite[Theorem~8.4]{Kap10} of the Margulis--Zassenhaus lemma \cite{Zas37} \cite[Theorem~1]{KM68}.

In the following computation, for any group $ G $ and integer $ k \in \N $, we will denote by $ c_{ k }^{ G } \colon G^{ k + 1 } \to G $  the $ k $th nested commutator map, which is defined recursively by the formulas $ c_{ 0 }^{ G } \of{ a_{ 0 } } = a_{ 0 } $ and
\[
c_{ k + 1 }^{ G } \of{ a_{ 0 } , \dotsc , a_{ k + 1 } } = \comm{ c_{ k }^{ G } \of{ a_{ 0 } , \dotsc , a_{ k } } }{ a_{ k + 1 } } = c_{ k }^{ G } \of{ a_{ 0 } , \dotsc , a_{ k } } a_{ k + 1 } c_{ k }^{ G } \of{ a_{ 0 } , \dotsc , a_{ k } }^{ -1 } a_{ k + 1 }^{ -1 } .
\]
Let $ \ell $ denote the length of the lower central series for $ \xi \of{ \Gamma } $, so that $ c_{ \ell }^{ \xi \of{ \Gamma } } \of{ \xi \of{ \Gamma }^{ \ell + 1 } } = \set{ \vect{ I }_{ p } } $ is trivial, and suppose for a contradiction that $ \zeta $ is not discrete and faithful. Then
\[
\lim_{ n \to \infty }{ \zeta \of{ \gamma_{ n } } } = \vect{ I }_{ q }
\]
for some sequence $ \of{ \gamma_{ n } }_{ n = 0 }^{ \infty } $ in $ \Gamma $ which is not eventually $ 1 $. Applying \autoref{lem:commutators} $ \ell $ times and passing to a subsequence (which we also denote $ \of{ \gamma_{ n } }_{ n = 0 }^{ \infty } $), we obtain $ \delta_{ 1 } , \dotsc , \delta_{ \ell } \in \Gamma $ so that $ c^{ \Gamma }_{ \ell } \of{ \gamma_{ n } , \delta_{ 1 } , \dotsc , \delta_{ \ell } } $ is not eventually $ 1 $. However, note that
\begin{align*}
\lim_{ n \to \infty }{ \zeta \of{ c_{ \ell }^{ \Gamma } \of{ \gamma_{ n } , \delta_{ 1 } , \dotsc , \delta_{ \ell } } } } & = \lim_{ n \to \infty }{ c^{ \SL_{ q }^{ * } \of{ \K } }_{ \ell } \of{ \zeta \of{ \gamma_{ n } } , \zeta \of{ \delta_{ 1 } } , \dotsc , \zeta \of{ \delta_{ \ell } } } } \\
& = c^{ \SL_{ q }^{ * } \of{ \K } }_{ \ell } \of{ \lim_{ n \to \infty }{ \zeta \of{ \gamma_{ n } } } , \zeta \of{ \delta_{ 1 } } , \dotsc , \zeta \of{ \delta_{ \ell } } } \\
& = c_{ \ell }^{ \SL_{ q }^{ * } \of{ \K } } \of{ \vect{ I }_{ q } , \zeta \of{ \delta_{ 1 } } , \dotsc , \zeta \of{ \delta_{ \ell } } } = \vect{ I }_{ q } ,
\end{align*}
and so
\[
\lim_{ n \to \infty }{ \rho' \of{ c^{ \Gamma }_{ \ell } \of{ \gamma_{ n } , \delta_{ 1 } , \dotsc , \delta_{ \ell } } } } = \lim_{ n \to \infty }{ \begin{pmatrix} \vect{ I }_{ p } & \vect{ 0 } \\ \vect{ 0 } & \zeta \of{ c^{ \Gamma }_{ \ell } \of{ \gamma_{ n } , \delta_{ 1 } , \dotsc , \delta_{ \ell } } } \end{pmatrix} } = \begin{pmatrix} \vect{ I }_{ p } & \vect{ 0 } \\ \vect{ 0 } & \vect{ I }_{ q } \end{pmatrix} = \vect{ I }_{ p + q } .
\]
This contradiction implies that $ \zeta $ is discrete and faithful.

\item[3.]

We will first show that $ \ker \of{ \rho } \subseteq \ker \of{ \varphi } \cap \ker \of{ \xi } \cap \ker \of{ \zeta } \cap \kappa^{ -1 } \of{ \set{ \vect{ 0 } } } $. To that end, let $ \gamma \in \ker \of{ \rho } $, and note that
\begin{align*}
\varphi \of{ \gamma } & = \log \of{ e^{ \varphi \of{ \gamma } } } = \log \of{ e^{ \varphi \of{ \gamma } } \abs{ \det \of{ \xi \of{ \gamma } } } } = \log \of{ \abs{ \det \of{ e^{ \frac{ 1 }{ p } \varphi \of{ \gamma } } \xi \of{ \gamma } } } } = \log \of{ \abs{ \det \of{ \vect{ I }_{ p } } } } = 0 ,
\end{align*}
so that
\[
\begin{pmatrix} \xi \of{ \gamma } & \kappa \of{ \gamma } \\ \vect{ 0 } & \zeta \of{ \gamma } \end{pmatrix} = \begin{pmatrix} e^{ \frac{ 1 }{ p } \varphi \of{ \gamma } } \xi \of{ \gamma } & \kappa \of{ \gamma } \\ \vect{ 0 } & e^{ -\frac{ 1 }{ q } \varphi \of{ \gamma } } \zeta \of{ \gamma } \end{pmatrix} = \vect{ I }_{ p + q } = \begin{pmatrix} \vect{ I }_{ p } & \vect{ 0 } \\ \vect{ 0 } & \vect{ I }_{ q } \end{pmatrix} .
\]
In summary, we have shown that if $ \rho \of{ \gamma } = \vect{ I }_{ p + q } $, then $ \varphi \of{ \gamma } = 0 $, $ \xi \of{ \gamma } = \vect{ I }_{ p } $, $ \zeta \of{ \gamma } = \vect{ I }_{ q } $, and $ \kappa \of{ \gamma } = \vect{ 0 } $. Thus these maps descend to a homomorphism $ \overline{ \varphi } \colon \overline{ \Gamma } \to \R $, representations $ \overline{ \rho } \colon \overline{ \Gamma } \to \SL_{ p + q }^{ * } \of{ \K } $, $ \overline{ \xi } \colon \overline{ \Gamma } \to U_{ p }^{ * } \of{ \K } $, and $ \overline{ \zeta } \colon \overline{ \Gamma } \to \SL_{ q }^{ * } \of{ \K } $, and a compatible $ \overline{ \kappa } \colon \overline{ \Gamma } \to \K^{ p \times q } $, where $ \overline{ \Gamma } $ is the non-elementary hyperbolic group $ \faktor{ \Gamma }{ \ker \of{ \rho } } $. Note that $ \overline{ \rho } $ is discrete and faithful and itself a reducible suspension (of $ \overline{ \zeta } $).

As a finitely generated linear group, $ \overline{ \Gamma } $ contains a finite index torsion-free subgroup $ \overline{ \Delta } $ by the Selberg lemma \cite[Lemma~8]{Sel62}. Note that $ \overline{ \Delta } $ is itself a torsion-free non-elementary hyperbolic group and therefore centerless. Applying part (2) to $ \rest{ \overline{ \rho } }_{ \overline{ \Delta } } = \rho_{ p , q } \of{ \rest{ \overline{ \varphi } }_{ \overline{ \Delta } } , \rest{ \overline{ \xi } }_{ \overline{ \Delta } } , \rest{ \overline{ \zeta } }_{ \overline{ \Delta } } , \rest{ \overline{ \kappa } }_{ \overline{ \Delta } } } $, we find that $ \rest{ \overline{ \zeta } }_{ \overline{ \Delta } } $ is discrete and faithful. Let $ \Delta $ be the preimage of $ \overline{ \Delta } $ under the quotient map $ \Gamma \onto \overline{ \Gamma } $, and note that $ \rest{ \zeta }_{ \Delta } $ is discrete and has finite kernel, since $ \ker \of{ \rho } $ is finite. Since $ \Delta $ has finite index $ \ind{ \Gamma }{ \Delta } = \ind{ \overline{ \Gamma } }{ \overline{ \Delta } } $ in $ \Gamma $, this implies that $ \zeta $ is discrete and has finite kernel. \qedhere

\end{enumerate}

\end{proof}

\end{proposition+}

\begin{remark+}

The hypothesis that $ \Gamma $ is centerless in (2) of the above result is necessary; for example, extend a discrete and faithful representation $ \zeta \colon \Gamma \to \SL_{ q }^{ * } \of{ \K } $ to a representation $ \widehat{ \zeta } \colon \Gamma \times \faktor{ \Z }{ 2 \Z } \to \SL_{ q }^{ * } \of{ \K } $ with the formula $ \widehat{ \zeta } \of{ \gamma , k + 2 \Z } = \zeta \of{ \gamma } $. Then $ \widehat{ \zeta } \of{ \gamma } $ is not faithful, but the reducible suspension $ \rho \colon \Gamma \times \faktor{ \Z }{ 2 \Z } \to \SL_{ d }^{ * } \of{ \K } $ defined by the formula $ \rho \of{ \gamma , k + 2 \Z } = \of{ -1 }^{ k } \vect{ I }_{ p } \oplus \widehat{ \zeta } \of{ \gamma } $ is discrete and faithful.

\end{remark+}

As a consequence of \autoref{prop:discreteness}, we obtain hypotheses under which a reducible suspension is discrete and faithful (resp. has finite kernel) if and only if the original representation is.

\begin{corollary+} \label{cor:discreteness}

Suppose that $ \xi $ has nilpotent image.

\begin{enumerate}

\item[1.]

$ \rho $ is discrete and has finite kernel if and only if $ \zeta $ is discrete and has finite kernel; and

\item[2.]

If $ \Gamma $ is centerless, then $ \rho $ is discrete and faithful if and only if $ \zeta $ is discrete and faithful.

\end{enumerate}

\begin{proof}

(1) follows from (1) and (3) of \autoref{prop:discreteness}, and (2) follows from (1) and (2) of \autoref{prop:discreteness}. \qedhere

\end{proof}

\end{corollary+}

We now consider when $ \rho $ is a quasi-isometric embedding in terms of when $ \zeta $ is a quasi-isometric embedding. We recall that work of Delzant--Guichard--Labourie--Mozes \cite{DGLM11} implies that these conditions are equivalent to inducing displacing actions on the appropriate symmetric spaces.

\begin{proposition+} \label{prop:displacement}

If $ \zeta $ is a quasi-isometric embedding, then $ \rho $ is a quasi-isometric embedding.

\begin{proof}

Denote by $ \len_{ \zeta } $ and $ \len_{ \rho } $ the translation lengths of the respective actions of $ \Gamma $ on the symmetric spaces of $ \SL_{ q }^{ * } \of{ \K } $ and $ \SL_{ p + q }^{ * } \of{ \K } $, respectively. By \cite[Corollary~4.0.6]{DGLM11}, the first of these actions is \emph{displacing}; that is, $ \len_{ \zeta } $ grows at least linearly in the translation length $ \norm{ \oparg }_{ \Sigma } $. Note that
\begin{align*}
\len_{ \rho } \of{ \gamma } & = \sqrt{ \abs{ \log \of{ \lambda_{ 1 } \of{ \rho \of{ \gamma } } } }^{ 2 } + \dotsb + \abs{ \log \of{ \lambda_{ p + q } \of{ \rho \of{ \gamma } } } }^{ 2 } } \geq \frac{ \log \of{ \lambda_{ 1 } \of{ \rho \of{ \gamma } } } - \log \of{ \lambda_{ p + q } \of{ \rho \of{ \gamma } } } }{ 2 } \\
& = \frac{ 1 }{ 2 } \log \of{ \frac{ \lambda_{ 1 } \of{ \rho \of{ \gamma } } }{ \lambda_{ p + q } \of{ \rho \of{ \gamma } } } } \geq \frac{ 1 }{ 2 } \log \of{ \frac{ e^{ -\frac{ 1 }{ q } \varphi \of{ \gamma } } \lambda_{ 1 } \of{ \zeta \of{ \gamma } } }{ e^{ -\frac{ 1 }{ q } \varphi \of{ \gamma } } \lambda_{ q } \of{ \zeta \of{ \gamma } } } } = \frac{ 1 }{ 2 } \log \of{ \frac{ \lambda_{ 1 } \of{ \zeta \of{ \gamma } } }{ \lambda_{ q } \of{ \zeta \of{ \gamma } } } } \\
& \geq \frac{ 1 }{ 2 } \max_{ 1 \leq k \leq q }{ \abs{ \log \of{ \lambda_{ k } \of{ \zeta \of{ \gamma } } } } } \geq \frac{ \sqrt{ \abs{ \log \of{ \lambda_{ 1 } \of{ \zeta \of{ \gamma } } } }^{ 2 } + \dotsb + \abs{ \log \of{ \lambda_{ q } \of{ \zeta \of{ \gamma } } } }^{ 2 } } }{ 2 \sqrt{ q } } = \frac{ \len_{ \zeta } \of{ \gamma } }{ 2 \sqrt{ q } }
\end{align*}
for all $ \gamma \in \Gamma $. Thus $ \len_{ \rho } $ grows at least linearly in $ \norm{ \oparg }_{ \Sigma } $, and so the induced action of $ \Gamma $ on the symmetric space of $ \SL_{ p + q }^{ * } \of{ \K } $ is displacing. By \cite[Proposition~2.2.1, Lemma~2.0.1]{DGLM11} $ \rho $ is a quasi-isometric embedding. \qedhere

\end{proof}

\end{proposition+}

Finally, we will apply the previous results to representations in $ \SL_{ 3 }^{ * } \of{ \K } $; that is, we will consider the case $ p = 1 $ and $ q = 2 $. In this case, the fact that $ U_{ 1 }^{ * } \of{ \K } = \SL_{ 1 }^{ * } \of{ \K } $ is abelian will imply that some of the hypotheses in the previous and upcoming results are satisfied automatically. For example, any representation $ \xi \colon \Gamma \to U_{ 1 }^{ * } \of{ \R } $ has abelian and therefore nilpotent image. Together with \autoref{cor:discreteness}, this observation will comprise the main part of the proof of \autoref{cor:rankTwo}, which we restate below for the reader's convenience.

\rankTwo*

We will prove \autoref{cor:rankTwo} for closed hyperbolic surface groups in $ \SL_{ 3 }^{ \pm } \of{ \R } $; the argument is identical for closed hyperbolic $ 3 $-manifold groups in $ \SL_{ 3 } \of{ \C } $.

\begin{proof}

Fix a reducible, discrete, and faithful representation $ \rho \colon \Gamma \to \SL_{ 3 }^{ \pm } \of{ \R } $ of a closed hyperbolic surface group $ \Gamma $. If $ \rho \of{ \Gamma } $ preserves a line in $ \R^{ 3 } $, note that $ \rho $ is conjugate to a discrete and faithful reducible suspension $ \rho_{ 1 , 2 } \of{ \varphi , \xi , \zeta , \kappa } $, for some homomorphism $ \varphi \colon \Gamma \to \R $, representations $ \xi \colon \Gamma \to \SL_{ 1 }^{ \pm } \of{ \R } $ and $ \zeta \colon \Gamma \to \SL_{ 2 }^{ \pm } \of{ \R } $, and a compatible $ \kappa \colon \Gamma \to \R^{ 1 \times 2 } $. Since $ \Gamma $ is centerless and $ \xi \of{ \Gamma } \subseteq \set{ \pm 1 } $ is abelian and therefore nilpotent, $ \zeta $ is discrete and faithful by \autoref{cor:discreteness}. As a discrete and faithful representation of a closed hyperbolic surface group in $ \SL_{ 2 }^{ \pm } \of{ \R } $, $ \zeta $ is a quasi-isometric embedding. Thus \autoref{prop:displacement} implies that $ \rho $ is a quasi-isometric embedding.

On the other hand, if $ \rho \of{ \Gamma } $ preserves a plane in $ \R^{ 3 } $, then the contragredient $ \contra{ \rho } \colon \Gamma \to \SL_{ 3 }^{ \pm } \of{ \R } $ is discrete and faithful, and it preserves a line in $ \R^{ 3 } $. The above argument implies that $ \contra{ \rho } $ is displacing, and hence so is $ \rho $, since $ \rho $ and $ \contra{ \rho } $ have the same translation lengths. \qedhere

\end{proof}

\section{Convexity} \label{sec:convexity}

In this section, we will prove \autoref{thm:convexity}, that the Anosov reducible suspensions of a fixed representation $ \Gamma \to \SL_{ q }^{ * } \of{ \K } $ correspond to a bounded convex domain in the finite-dimensional vector space $ \hom \of{ \Gamma , \R } $. To that end, fix for this section a homomorphism $ \varphi \colon \Gamma \to \R $, representations $ \xi \colon \Gamma \to U_{ p }^{ * } \of{ \K } $ and $ \zeta \colon \Gamma \to \SL_{ q }^{ * } \of{ \K } $, and a compatible $ \kappa \colon \Gamma \to \K^{ p \times q } $, and consider the reducible suspension $ \rho = \rho_{ p , q } \of{ \varphi , \xi , \zeta , \kappa } $.

Since we aim to use Kassel--Potrie's \cite[Corollary~4.6]{KP22} characterization of Anosov representations to determine when $ \rho $ is Anosov, it will prove vitally useful to determine for each $ \gamma \in \Gamma $ the eigenvalues of $ \rho \of{ \gamma } $, $ p $ of which come from the block $ e^{ \frac{ 1 }{ p } } \xi \of{ \gamma } $ and the other $ q $ of which come from the block $ e^{ -\frac{ 1 }{ q } \varphi \of{ \gamma } } \zeta \of{ \gamma } $. Specifically, $ \rho \of{ \gamma } $ has an eigenvalue of magnitude
\[
e^{ \frac{ 1 }{ p } \varphi \of{ \gamma } } \lambda_{ j } \of{ \xi \of{ \gamma } } = e^{ \frac{ 1 }{ p } \varphi \of{ \gamma } }
\]
for each integer $ 1 \leq j \leq p $ and an eigenvalue of magnitude $ e^{ -\frac{ 1 }{ q } \varphi \of{ \gamma } } \lambda_{ k } \of{ \zeta \of{ \gamma } } $ for each integer $ 1 \leq k \leq q $. We first observe that this implies that whether $ \rho $ is Anosov is independent of $ \xi $ and $ \kappa $.

\begin{lemma+} \label{lem:generality}

$ \rho $ is $ P_{ k } $-Anosov if and only if $ \rho_{ p , q } \of{ \varphi , \vect{ I }_{ p } , \zeta , \vect{ 0 } } $ is $ P_{ k } $-Anosov.

\begin{proof}

For any $ \gamma \in \Gamma $, the matrices $ \rho \of{ \gamma } $ and $ \rho_{ p , q } \of{ \varphi , \vect{ I }_{ p } , \zeta , \vect{ 0 } } \of{ \gamma } $ have the same eigenvalue magnitudes, counted with multiplicity; namely, each matrix has $ p $ eigenvalues of magnitude $ e^{ \frac{ 1 }{ p } } $ and also an eigenvalue of magnitude $ e^{ -\frac{ 1 }{ q } \varphi \of{ \gamma } } \lambda_{ j } \of{ \zeta \of{ \gamma } } $ for each integer $ 1 \leq j \leq q $. Thus $ \rho $ is $ P_{ k } $-Anosov if and only if $ \rho_{ p , q } \of{ \varphi , \vect{ I }_{ p } , \zeta , \vect{ 0 } } $ is. \qedhere

\end{proof}

\end{lemma+}

Note that \autoref{lem:generality} implies that $ A^{ p }_{ k } \of{ \zeta } $ (defined in \autoref{sec:introduction}) is well-defined for any representation $ \zeta \colon \Gamma \to \SL_{ q }^{ * } \of{ \K } $. We will use the following necessary bounds on the configuration of the eigenvalues of reducible suspensions of $ \zeta $ to prove \autoref{thm:convexity} and \autoref{cor:bounds}.

\begin{proposition+} \label{prop:eigenvalues}

If $ \rho $ is $ P_{ k } $-Anosov for some integer $ 1 \leq k \leq \frac{ p + q }{ 2 } $, then $ 1 \leq k \leq \frac{ q }{ 2 } $, and $ \zeta $ is $ P_{ k } $-Anosov. Moreover, for all $ \gamma \in \Gamma $,
\[
\lambda_{ q - k + 1 } \of{ \zeta \of{ \gamma } } \leq e^{ \frac{ p + q }{ p q } \varphi \of{ \gamma } } \leq \lambda_{ k } \of{ \zeta \of{ \gamma } } ,
\]
so that $ \lambda_{ k } \of{ \rho \of{ \gamma } } = e^{ -\frac{ 1 }{ q } \varphi \of{ \gamma } } \lambda_{ k } \of{ \zeta \of{ \gamma } } $. The above inequalities are strict for all infinite order $ \gamma \in \Gamma $.

\end{proposition+}

We note that the inequality in the above result serves to fix the relative configuration of the eigenvalue magnitudes of $ \rho \of{ \gamma } $, since it is equivalent to
\[
e^{ -\frac{ 1 }{ q } \varphi \of{ \gamma } } \lambda_{ q - k + 1 } \of{ \zeta \of{ \gamma } } \leq e^{ \frac{ 1 }{ p } \varphi \of{ \gamma } } \leq e^{ -\frac{ 1 }{ q } \varphi \of{ \gamma } } \lambda_{ k } \of{ \zeta \of{ \gamma } } .
\]
Specifically, the above inequality is strict if and only if $ e^{ \frac{ 1 }{ p } \varphi \of{ \gamma } } $ is among neither the $ k $ largest nor the $ k $ smallest eigenvalues of $ \rho \of{ \gamma } $. Otherwise, we will see that $ \rho $ cannot be $ P_{ k } $-Anosov.

\begin{proof} 

By \autoref{lem:generality}, we may assume without loss of generality that $ \xi \of{ \gamma } = \vect{ I }_{ p } $ and $ \kappa \of{ \gamma } = \vect{ 0 } $ for all $ \gamma \in \Gamma $, so that $ \rho $ has the formula
\[
\rho \of{ \gamma } = e^{ \frac{ 1 }{ p } \varphi \of{ \gamma } } \vect{ I }_{ p } \oplus e^{ -\frac{ 1 }{ q } \varphi \of{ \gamma } } \zeta \of{ \gamma } = \begin{pmatrix} e^{ \frac{ 1 }{ p } \varphi \of{ \gamma } } \vect{ I }_{ p } & \vect{ 0 } \\ \vect{ 0 } & e^{ -\frac{ 1 }{ q } \varphi \of{ \gamma } } \zeta \of{ \gamma } \end{pmatrix} .
\]
In particular, we note that for any $ \gamma \in \Gamma $, $ \rho \of{ \gamma } $ has $ p $ eigenvalues of magnitude $ e^{ \frac{ 1 }{ p } \varphi \of{ \gamma } } $ with corresponding eigenvectors $ \vect{ e }_{ 1 } , \dotsc , \vect{ e }_{ p } $, and it has one eigenvalue of magnitude $ e^{ -\frac{ 1 }{ q } \varphi \of{ \gamma } } \lambda_{ j } \of{ \zeta \of{ \gamma } } $ with generalized eigenvector in $ \lspan_{ \K } \of{ \vect{ e }_{ p + 1 } , \dotsc , \vect{ e }_{ p + q } } $ for each integer $ 1 \leq j \leq q $.

We claim that it suffices to show that for any infinite order $ \gamma \in \Gamma $, $ e^{ \frac{ 1 }{ p } \varphi \of{ \gamma } } $ is among neither the $ k $ largest nor the $ k $ smallest eigenvalues of $ \rho \of{ \gamma } $. Indeed, if this is true, then for all such infinite order $ \gamma $, the $ k $ largest and $ k $ smallest eigenvalues of $ \rho \of{ \gamma } $ are the (properly scaled) eigenvalues of $ \zeta \of{ \gamma } $, of which there are $ q $. More concretely, $ 2 k \leq q $, and $ e^{ -\frac{ 1 }{ q } \varphi \of{ \gamma } } \lambda_{ k } \of{ \zeta \of{ \gamma } } $ and $ e^{ -\frac{ 1 }{ q } \varphi \of{ \gamma } } \lambda_{ q - k + 1 } \of{ \zeta \of{ \gamma } } $ are the $ k $th largest and smallest eigenvalues of $ \rho \of{ \gamma } $. Thus
\[
e^{ -\frac{ 1 }{ q } \varphi \of{ \gamma } } \lambda_{ q - k + 1 } \of{ \zeta \of{ \gamma } } = \lambda_{ p + q - k + 1 } \of{ \rho \of{ \gamma } } < e^{ \frac{ 1 }{ p } \varphi \of{ \gamma } } < \lambda_{ k } \of{ \rho \of{ \gamma } } = e^{ -\frac{ 1 }{ q } \varphi \of{ \gamma } } \lambda_{ k } \of{ \zeta \of{ \gamma } } .
\]
Multiplying the above inequalities by $ e^{ \frac{ 1 }{ q } \varphi \of{ \gamma } } $ gives the desired result for infinite order elements.

On the other hand, if $ \gamma \in \Gamma $ has finite order, then its eigenvalues all have magnitude $ 1 $. Moreover, $ \varphi \of{ \gamma } = 0 $, since $ \R $ is torsion-free, and so
\[
\lambda_{ q - k } \of{ \rho \of{ \gamma } } = e^{ \frac{ p + q }{ p q } \varphi \of{ \gamma } } = \lambda_{ k } \of{ \rho \of{ \gamma } } = 1 .
\]

Thus it suffices to show that for any infinite order $ \gamma \in \Gamma $, $ e^{ \frac{ 1 }{ p } \varphi \of{ \gamma } } $ is among neither the $ k $ largest nor the $ k $ smallest eigenvalues of $ \rho \of{ \gamma } $. To that end, let $ \xi_{ \rho }^{ k } \colon \partial \Gamma \to \Gr_{ k } \of{ \K^{ p + q } } $ denote the $ P_{ k } $-Anosov limit map for $ \rho $, and consider the following subsets
\begin{align*}
A & = \set{ V \in \Gr_{ k } \of{ \K^{ p + q } } : \lspan_{ \K } \of{ \vect{ e }_{ 1 } , \dotsc , \vect{ e }_{ p } } \subseteq V } \\
B & = \set{ V \in \Gr_{ k } \of{ \K^{ p + q } } : V \subseteq \lspan_{ \K } \of{ \vect{ e }_{ p + 1 } , \dotsc , \vect{ e }_{ p + q } } }
\end{align*}
of the Grassmannian $ \Gr_{ k } \of{ \K^{ p + q } } $. We note that $ A $ and $ B $ are closed and disjoint.

Since $ \rho $ is $ P_{ k } $-Anosov, $ \rho \of{ \gamma } $ is $ P_{ k } $-biproximal, meaning that
\begin{align*}
\lambda_{ k } \of{ \rho \of{ \gamma } } & > \lambda_{ k + 1 } \of{ \rho \of{ \gamma } } & \lambda_{ p + q - k + 1 } \of{ \rho \of{ \gamma } } = \frac{ 1 }{ \lambda_{ k } \of{ \rho \of{ \gamma } } } < \frac{ 1 }{ \lambda_{ k + 1 } \of{ \rho \of{ \gamma } } } = \lambda_{ p + q - k } \of{ \rho \of{ \gamma } } .
\end{align*}
Moreover, $ \xi_{ \rho }^{ k } \of{ \gamma^{ + } } = \rho \of{ \gamma }^{ + }_{ k } $ and $ \xi_{ \rho }^{ k } \of{ \gamma^{ - } } = \rho \of{ \gamma }^{ - }_{ k } $ are spanned by the generalized eigenvectors corresponding to the $ k $ largest and $ k $ smallest eigenvalues of $ \rho \of{ \gamma } $. Since $ \vect{ e }_{ 1 } , \dotsc , \vect{ e }_{ p } $ have eigenvalues of the same magnitude, both of the aforementioned disjoint lists of generalized eigenvectors contain either all $ \vect{ e }_{ 1 } , \dotsc , \vect{ e }_{ p } $ or none of them. We conclude that for such an infinite order $ \gamma \in \Gamma $, precisely one of the following three cases occur:
\begin{enumerate}

\item[(i)]

Suppose that $ e^{ \frac{ 1 }{ p } \varphi \of{ \gamma } } $ is among the $ k $ largest eigenvalues of $ \rho \of{ \gamma } $. Then $ \lspan_{ \K } \of{ \vect{ e }_{ 1 } , \dotsc , \vect{ e }_{ p } } \subseteq \xi_{ \rho }^{ k } \of{ \gamma^{ + } } $, and so $ \xi_{ \rho }^{ k } \of{ \gamma^{ + } } \in A $. In fact, the same argument applies to any conjugates of $ \gamma $, since $ e^{ \frac{ 1 }{ p } \varphi \of{ \delta \gamma \delta^{ -1 } } } = e^{ \frac{ 1 }{ p } \varphi \of{ \gamma } } $ is among the $ k $ largest eigenvalues of $ \rho \of{ \delta \gamma \delta^{ -1 } } $. Thus
\[
\xi_{ \rho }^{ k } \of{ \delta \cdot \gamma^{ + } } = \xi_{ \rho }^{ k } \of{ \of{ \delta \gamma \delta^{ -1 } }^{ + } } = \rho \of{ \delta \gamma \delta^{ -1 } }^{ + }_{ k } \supseteq \lspan_{ \K } \of{ \vect{ e }_{ 1 } , \dotsc , \vect{ e }_{ p } }
\]
for all $ \delta \in \Gamma $, and so $ \xi_{ \rho }^{ k } \of{ \Gamma \cdot \gamma^{ + } } \subseteq A $.

On the other hand, $ e^{ \frac{ 1 }{ p } \varphi \of{ \gamma } } $ is not among the $ k $ smallest eigenvalues of $ \rho \of{ \gamma } $, so the generalized eigenvectors corresponding to said eigenvectors are in the complementary subspace; that is, $ \xi_{ \rho }^{ k } \of{ \gamma^{ - } } \subseteq \lspan_{ \K } \of{ \vect{ e }_{ p + 1 } , \dotsc , \vect{ e }_{ p + q } } $, and so $ \xi_{ \rho }^{ k } \of{ \gamma^{ - } } \in B $.

\item[(ii)]

Now suppose that $ e^{ \frac{ 1 }{ p } \varphi \of{ \gamma } } $ is among the $ k $ smallest eigenvalues of $ \rho \of{ \gamma } $. Then $ e^{ \frac{ 1 }{ p } \varphi \of{ \gamma^{ -1 } } } $ is among the $ k $ largest eigenvalues of $ \rho \of{ \gamma^{ -1 } } $, and so the above argument implies that $ \xi_{ \rho }^{ k } \of{ \Gamma \cdot \gamma^{ - } } \subseteq A $ and $ \xi_{ \rho }^{ k } \of{ \gamma^{ + } } \in B $.

\item[(iii)]

Finally, suppose that $ e^{ \frac{ 1 }{ p } \varphi \of{ \gamma } } $ is among neither the $ k $ largest eigenvalues nor the $ k $ smallest eigenvalues of $ \rho \of{ \gamma } $. Then the generalized eigenvectors corresponding to said eigenvectors are in the complementary subspace; that is, $ \xi_{ \rho }^{ k } \of{ \gamma^{ \pm } } \subseteq \lspan_{ \K } \of{ \vect{ e }_{ p + 1 } , \dotsc , \vect{ e }_{ p + q } } $, and so $ \xi_{ \rho }^{ k } \of{ \gamma^{ + } } , \xi_{ \rho }^{ k } \of{ \gamma^{ - } } \in B $. 

\end{enumerate}

Note that in the first two cases, there is some $ \gamma_{ * } \in \set{ \gamma , \gamma^{ -1 } } $ so that $ \xi_{ \rho }^{ k } \of{ \Gamma \cdot \gamma_{ * }^{ + } } \subseteq A $ but $ \xi_{ \rho }^{ k } \of{ \gamma_{ * }^{ - } } \in B $. Since the orbit of $ \gamma_{ * }^{ + } $ is dense in $ \partial \Gamma $ and $ A $ is closed in $ \Gr_{ k } \of{ \K^{ p + q } } $,
\begin{align*}
\xi_{ \rho }^{ k } \of{ \gamma_{ * }^{ - } } \in \xi_{ \rho }^{ k } \of{ \partial \Gamma } = \xi_{ \rho }^{ k } \of{ \overline{ \Gamma \cdot \gamma_{ * }^{ + } } } \subseteq \overline{ \xi_{ \rho }^{ k } \of{ \Gamma \cdot \gamma_{ * }^{ + } } } \subseteq \overline{ A } = A .
\end{align*}
Since $ A $ and $ B $ are disjoint, this contradiction implies that for all infinite order $ \gamma \in \Gamma $, $ e^{ \frac{ 1 }{ p } \varphi \of{ \gamma } } $ is among neither the $ k $ largest eigenvalues nor the $ k $ smallest eigenvalues of $ \rho \of{ \gamma } $. \qedhere

\end{proof}

We are now ready to prove \autoref{thm:convexity}, which we restate below for the reader's convenience.

\convexity*

\begin{proof}

If $ \zeta $ is not $ P_{ k } $-Anosov, then the contrapositive of \autoref{prop:eigenvalues} implies that $ A^{ p }_{ k } \of{ \zeta } $ is empty and therefore vacuously bounded, convex, and open. Thus we may assume that $ \zeta $ is $ P_{ k } $-Anosov.

\autoref{thm:stability} implies that any Anosov reducible suspension of $ \zeta $ has a neighborhood in $ \hom \of{ \Gamma , \SL_{ d }^{ * } \of{ \K } } $ consisting of Anosov representations. Since the map $ \varphi \mapsto \rho_{ p , q } \of{ \varphi , \vect{ I }_{ p } , \zeta , \vect{ 0 } } $ is continuous, this implies that $ A^{ p }_{ k } \of{ \zeta } $ is open in $ \hom \of{ \Gamma , \R } $.

For convexity, fix homomorphisms $ \varphi_{ 0 } , \varphi_{ 1 } \in A^{ p }_{ k } \of{ \zeta } $, so that the reducible suspensions $ \rho_{ 0 } = \rho_{ p , q } \of{ \varphi_{ 0 } , \vect{ I }_{ p } , \zeta , \vect{ 0 } } $ and $ \rho_{ 1 } = \rho_{ p , q } \of{ \varphi_{ 1 } , \vect{ I }_{ p } , \zeta , \vect{ 0 } } $ are $ P_{ k } $-Anosov. Then there are constants $ a_{ 0 } , a_{ 1 } > 0 $ and $ b_{ 0 } , b_{ 1 } \geq 0 $ so that
\begin{align*}
\log \of{ \frac{ \lambda_{ k } \of{ \rho_{ 0 } \of{ \gamma } } }{ \lambda_{ k + 1 } \of{ \rho_{ 0 } \of{ \gamma } } } } & \geq a_{ 0 } \norm{ \gamma }_{ \Sigma } - b_{ 0 } & \log \of{ \frac{ \lambda_{ k } \of{ \rho_{ 1 } \of{ \gamma } } }{ \lambda_{ k + 1 } \of{ \rho_{ 1 } \of{ \gamma } } } } & \geq a_{ 1 } \norm{ \gamma }_{ \Sigma } - b_{ 1 }
\end{align*}
for all $ \gamma \in \Gamma $. Let
\begin{align*}
\varphi_{ t } & = \of{ 1 - t } \varphi_{ 0 } + t \varphi_{ 1 } & a_{ t } & = \of{ 1 - t } a_{ 0 } + t a_{ 1 } > 0 \\ \\
\rho_{ t } & = \rho_{ p , q } \of{ \varphi_{ t } , \vect{ I }_{ p } , \zeta , \vect{ 0 } } & b_{ t } & = \of{ 1 - t } b_{ 0 } + t b_{ 1 } \geq 0
\end{align*}
for each $ 0 < t < 1 $, and note that \autoref{prop:eigenvalues} implies that
\begin{align*}
\lambda_{ q - k } \of{ \zeta \of{ \gamma } } & \leq e^{ \frac{ p + q }{ p q } \varphi_{ 0 } \of{ \gamma } } \leq \lambda_{ k } \of{ \zeta \of{ \gamma } } & \lambda_{ q - k } \of{ \zeta \of{ \gamma } } & \leq e^{ \frac{ p + q }{ p q } \varphi_{ 1 } \of{ \gamma } } \leq \lambda_{ k } \of{ \zeta \of{ \gamma } } 
\end{align*}
for all $ \gamma \in \Gamma $. This implies that the line segment
\[
\set{ e^{ \frac{ p + q }{ p q } \varphi_{ t } \of{ \gamma } } \in \R : 0 \leq t \leq 1 } = \clint{ \min \of{ e^{ \frac{ p + q }{ p q } \varphi_{ 0 } \of{ \gamma } } , e^{ \frac{ p + q }{ p q } \varphi_{ 1 } \of{ \gamma } } } }{ \max \of{ e^{ \frac{ p + q }{ p q } \varphi_{ 0 } \of{ \gamma } } , e^{ \frac{ p + q }{ p q } \varphi_{ 1 } \of{ \gamma } } } }
\]
is contained in the closed interval between $ \lambda_{ k } \of{ \zeta \of{ \gamma } } $ and $ \lambda_{ q - k } \of{ \zeta \of{ \gamma } } $, and so
\[
\lambda_{ j } \of{ \rho_{ t } \of{ \gamma } } = e^{ -\frac{ 1 }{ q } \varphi_{ t } \of{ \gamma } } \lambda_{ j } \of{ \zeta \of{ \gamma } }
\]
for all integers $ 1 \leq j \leq k $, $ 0 \leq t \leq 1 $, and $ \gamma \in \Gamma $. Note that
\begin{align*}
\log \of{ \frac{ e^{ -\frac{ 1 }{ q } \varphi_{ t } \of{ \gamma } } \lambda_{ k } \of{ \zeta \of{ \gamma } } }{ e^{ \frac{ 1 }{ p } \varphi_{ t } \of{ \gamma } } } } & = \log \of{ \frac{ \lambda_{ k } \of{ \zeta \of{ \gamma } } }{ e^{ \frac{ p + q }{ p q } \varphi_{ t } \of{ \gamma } } } } = \log \of{ \lambda_{ k } \of{ \zeta \of{ \gamma } } } - \frac{ p + q }{ p q } \varphi_{ t } \of{ \gamma } \\
& = \of{ 1 - t } \of{ \log \of{ \lambda_{ k } \of{ \zeta \of{ \gamma } } } - \frac{ p + q }{ p q } \varphi_{ 0 } \of{ \gamma } } + t \of{ \log \of{ \lambda_{ k } \of{ \zeta \of{ \gamma } } } - \frac{ p + q }{ p q } \varphi_{ 1 } \of{ \gamma } } \\
& = \of{ 1 - t } \log \of{ \frac{ \lambda_{ k } \of{ \zeta \of{ \gamma } } }{ e^{ \frac{ p + q }{ p q } \varphi_{ 0 } \of{ \gamma } } } } + t \log \of{ \frac{ \lambda_{ k } \of{ \zeta \of{ \gamma } } }{ e^{ \frac{ p + q }{ p q } \varphi_{ 1 } \of{ \gamma } } } } \\
& \geq \of{ 1 - t } \log \of{ \frac{ \lambda_{ k } \of{ \rho_{ 0 } \of{ \gamma } } }{ \lambda_{ k + 1 } \of{ \rho_{ 0 } \of{ \gamma } } } } + t \log \of{ \frac{ \lambda_{ k } \of{ \rho_{ 1 } \of{ \gamma } } }{ \lambda_{ k + 1 } \of{ \rho_{ 1 } \of{ \gamma } } } } \\
& \geq \of{ 1 - t } \of{ a_{ 0 } \norm{ \gamma }_{ \Sigma } - b_{ 0 } } + t \of{ a_{ 1 } \norm{ \gamma }_{ \Sigma } - b_{ 1 } } = a_{ t } \norm{ \gamma }_{ \Sigma } - b_{ t }
\end{align*}
grows at least linearly in $ \norm{ \gamma }_{ \Sigma } $. On the other hand,
\[
\log \of{ \frac{ e^{ -\frac{ 1 }{ q } \varphi_{ t } \of{ \gamma } } \lambda_{ k } \of{ \zeta \of{ \gamma } } }{ e^{ -\frac{ 1 }{ q } \varphi_{ t } \of{ \gamma } } \lambda_{ k + 1 } \of{ \zeta \of{ \gamma } } } } = \log \of{ \frac{ \lambda_{ k } \of{ \zeta \of{ \gamma } } }{ \lambda_{ k + 1 } \of{ \zeta \of{ \gamma } } } }
\]
also grows at least linearly in $ \norm{ \gamma }_{ \Sigma } $, since $ \zeta \colon \Gamma \to \SL_{ q }^{ * } \of{ \K } $ is $ P_{ k } $-Anosov. Therefore,
\[
\log \of{ \frac{ \lambda_{ k } \of{ \rho_{ t } \of{ \gamma } } }{ \lambda_{ k + 1 } \of{ \rho_{ t } \of{ \gamma } } } } = \min \of{ \log \of{ \frac{ e^{ -\frac{ 1 }{ q } \varphi_{ t } \of{ \gamma } } \lambda_{ k } \of{ \zeta \of{ \gamma } } }{ e^{ \frac{ 1 }{ p } \varphi \of{ \gamma } } } } , \log \of{ \frac{ e^{ -\frac{ 1 }{ q } \varphi_{ t } \of{ \gamma } } \lambda_{ k } \of{ \zeta \of{ \gamma } } }{ e^{ -\frac{ 1 }{ q } \varphi_{ t } \of{ \gamma } } \lambda_{ k + 1 } \of{ \zeta \of{ \gamma } } } } }
\]
grows at least linearly in $ \norm{ \gamma }_{ \Sigma } $, and so $ \rho_{ t } $ is $ P_{ k } $-Anosov for all $ 0 \leq t \leq 1 $. Thus $ \varphi_{ t } \in A^{ p }_{ k } \of{ \zeta } $ for all $ 0 \leq t \leq 1 $, and so $ A^{ p }_{ k } \of{ \zeta } $ is convex.

Finally, suppose for a contradiction that $ A^{ p }_{ k } \of{ \zeta } $ is unbounded. As an unbounded convex domain in a finite-dimensional real vector space, it contains a ray $ \set{ \varphi + t \psi \in \hom \of{ \Gamma , \R } : t \geq 0 } $. Choose $ \gamma \in \Gamma $ so that $ \psi \of{ \gamma } > 0 $, and note that $ e^{ \frac{ p + q }{ p q } \of{ \varphi \of{ \gamma } + t \psi \of{ \gamma } } } > \lambda_{ k } \of{ \zeta \of{ \gamma } } $ whenever
\[
t > \frac{ 1 }{ \psi \of{ \gamma } } \of{ \frac{ p q }{ p + q } \log \of{ \lambda_{ k } \of{ \zeta \of{ \gamma } } } - \varphi \of{ \gamma } } .
\]
\autoref{prop:eigenvalues} implies that $ \rho_{ p , q } \of{ \varphi + t \psi , \vect{ I }_{ p } , \zeta , \vect{ 0 } } $ is not $ P_{ k } $-Anosov for all such $ t $, so that $ \varphi + t \psi \notin A^{ p }_{ k } \of{ \zeta } $. This contradiction implies that $ A^{ p }_{ k } \of{ \zeta } $ is bounded. \qedhere

\qedhere

\end{proof}

\section{Reducible Suspensions of Symmetric Anosov Representations} \label{sec:symmetricAnosov}

Recall that for an integer $ 1 \leq k \leq \frac{ q }{ 2 } $, we call a representation $ \zeta \colon \Gamma \to \SL_{ q }^{ * } \of{ \K } $ \emph{symmetric $ P_{ k } $-Anosov} if it is $ P_{ k } $-Anosov and $ \lambda_{ k } \of{ \zeta \of{ \gamma } } = \lambda_{ k } \of{ \zeta \of{ \gamma^{ -1 } } } $ for all $ \gamma \in \Gamma $. In this section, we will use this property to build on the results of the previous section in precisely characterizing when reducible suspensions of symmetric Anosov representations are themselves Anosov. To that end, fix for this section a homomorphism $ \varphi \colon \Gamma \to \R $, representations $ \xi \colon \Gamma \to U_{ p }^{ * } \of{ \K } $ and $ \zeta \colon \Gamma \to \SL_{ q }^{ * } \of{ \K } $, and a compatible map $ \kappa \colon \Gamma \to \K^{ p \times q } $, and consider the reducible suspension $ \rho = \rho_{ p , q } \of{ \varphi , \xi , \zeta , \kappa } $. We begin by proving a slightly stronger version of \autoref{prop:eigenvalues} for reducible suspensions of symmetric representations.
\begin{proposition+} \label{prop:symmetricAnosov}

Suppose that $ \zeta $ is symmetric $ P_{ k } $-Anosov for some integer $ 1 \leq k \leq \frac{ q }{ 2 } $. If $ \rho $ is $ P_{ k } $-Anosov, then
\[
\lambda_{ q - k + 1 } \of{ \zeta \of{ \gamma } } \leq e^{ -\frac{ p + q }{ p q } \abs{ \varphi \of{ \gamma } } } \leq e^{ \frac{ p + q }{ p q } \abs{ \varphi \of{ \gamma } } } \leq \lambda_{ k } \of{ \zeta \of{ \gamma } }
\]
for all $ \gamma \in \Gamma $. Moreover, the first and last above inequalities are strict for all infinite order $ \gamma \in \Gamma $.

\begin{proof}

Since $ \rho $ is $ P_{ k } $-Anosov, \autoref{prop:eigenvalues} implies that
\[
\lambda_{ q - k + 1 } \of{ \zeta \of{ \gamma } } \leq e^{ \frac{ p + q }{ p q } \varphi \of{ \gamma } } \leq \lambda_{ k } \of{ \zeta \of{ \gamma } }
\]
for all $ \gamma \in \Gamma $, and that the above inequalities are strict for infinite order $ \gamma \in \Gamma $. Applying this to $ \gamma^{ -1 } $, we see that
\[
\lambda_{ q - k + 1 } \of{ \zeta \of{ \gamma } } = \frac{ 1 }{ \lambda_{ k } \of{ \zeta \of{ \gamma^{ -1 } } } } = \frac{ 1 }{ \lambda_{ k } \of{ \zeta \of{ \gamma } } } = \lambda_{ q - k + 1 } \of{ \zeta \of{ \gamma^{ -1 } } } \leq e^{ -\frac{ p + q }{ p q } \varphi \of{ \gamma } } \leq \lambda_{ k } \of{ \zeta \of{ \gamma^{ -1 } } } = \lambda_{ k } \of{ \zeta \of{ \gamma } }
\]
for all $ \gamma \in \Gamma $, and that the above inequalities are strict for infinite order $ \gamma \in \Gamma $. \qedhere

\end{proof}

\end{proposition+}

We are now ready to prove \autoref{thm:symmetricAnosov}, which we restate below for the reader's convenience.

\symmetricAnosov*

\begin{proof}

First suppose that
\[
\inf_{ \varphi \of{ \gamma } \neq 0 }{ \frac{ \log \of{ \lambda_{ k } \of{ \zeta \of{ \gamma } } } }{ \abs{ \varphi \of{ \gamma } } } } > \frac{ p + q }{ p q } .
\]
Then there is some constant $ a > \frac{ p + q }{ p q } $ so that $ \log \of{ \lambda_{ k } \of{ \zeta \of{ \gamma } } } \geq a \abs{ \varphi \of{ \gamma } } $ whenever $ \varphi \of{ \gamma } \neq 0 $. In fact, since $ \rho $ is symmetric $ P_{ k } $-Anosov, this inequality also holds when $ \varphi \of{ \gamma } = 0 $, so that
\[
\log \of{ \frac{ e^{ -\frac{ 1 }{ q } \varphi \of{ \gamma } } \lambda_{ k } \of{ \zeta \of{ \gamma } } }{ e^{ \frac{ 1 }{ p } \varphi \of{ \gamma } } } } = \log \of{ \lambda_{ k } \of{ \zeta \of{ \gamma } } } - \frac{ p + q }{ p q } \varphi \of{ \gamma } \geq \of{ 1 - \frac{ p + q }{ a p q } } \log \of{ \lambda_{ k } \of{ \zeta \of{ \gamma } } }
\]
for all $ \gamma \in \Gamma $. Our assumption that $ a > \frac{ p + q }{ p q } $ implies that the above coefficient of $ \log \of{ \lambda_{ k } \of{ \zeta \of{ \gamma } } } $ is positive. Moreover, for any infinite order $ \gamma \in \Gamma $,
\[
e^{ -\frac{ 1 }{ q } \varphi \of{ \gamma } } \lambda_{ k } \of{ \zeta \of{ \gamma } } \geq e^{ \of{ a - \frac{ 1 }{ q } } \abs{ \varphi \of{ \gamma } } } > e^{ \of{ \frac{ p + q }{ p q } - \frac{ 1 }{ q } } \abs{ \varphi \of{ \gamma } } } = e^{ \frac{ 1 }{ p } \abs{ \varphi \of{ \gamma } } } \geq e^{ \frac{ 1 }{ p } \varphi \of{ \gamma } } ,
\]
and thus $ \lambda_{ j } \of{ \rho \of{ \gamma } } = e^{ -\frac{ 1 }{ q } \varphi \of{ \gamma } } \lambda_{ j } \of{ \zeta \of{ \gamma } } $ for all integers $ 1 \leq j \leq k $. We see that
\begin{align*}
\log \of{ \frac{ \lambda_{ k } \of{ \rho \of{ \gamma } } }{ \lambda_{ k + 1 } \of{ \rho \of{ \gamma } } } } & = \min \of{ \log \of{ \frac{ e^{ -\frac{ 1 }{ q } \varphi \of{ \gamma } } \lambda_{ k } \of{ \zeta \of{ \gamma } } }{ e^{ \frac{ 1 }{ p } \varphi \of{ \gamma } } } } , \log \of{ \frac{ e^{ -\frac{ 1 }{ q } \varphi \of{ \gamma } } \lambda_{ k } \of{ \zeta \of{ \gamma } } }{ e^{ -\frac{ 1 }{ q } \varphi \of{ \gamma } } \lambda_{ k + 1 } \of{ \zeta \of{ \gamma } } } } } \\
& \geq \min \of{ \of{ 1 - \frac{ p + q }{ a p q } } \log \of{ \lambda_{ k } \of{ \zeta \of{ \gamma } } } , \log \of{ \frac{ \lambda_{ k } \of{ \zeta \of{ \gamma } } }{ \lambda_{ k + 1 } \of{ \zeta \of{ \gamma } } } } } .
\end{align*}
Since $ \zeta $ is symmetric $ P_{ k } $-Anosov, \autoref{lem:symmetricAnosov} implies that the above grows at least linearly in $ \norm{ \gamma }_{ \Sigma } $, and so $ \rho $ is $ P_{ k } $-Anosov; that is, $ \varphi \in A^{ p }_{ k } \of{ \zeta } $.

Now suppose that
\[
\inf_{ \varphi \of{ \gamma } \neq 0 }{ \frac{ \log \of{ \lambda_{ k } \of{ \zeta \of{ \gamma } } } }{ \abs{ \varphi \of{ \gamma } } } } < \frac{ p + q }{ p q } .
\]
Then $ \log \of{ \lambda_{ k } \of{ \zeta \of{ \gamma } } } < \frac{ p + q }{ p q } \abs{ \varphi \of{ \gamma } } $ for some $ \gamma \notin \ker \of{ \varphi } $, so that $ \lambda_{ k } \of{ \zeta \of{ \gamma } } < e^{ \frac{ p + q }{ p q } \abs{ \varphi \of{ \gamma } } } $. \autoref{prop:symmetricAnosov} now implies that $ \rho $ is not $ P_{ k } $-Anosov; that is, $ \varphi \notin A^{ p }_{ k } \of{ \zeta } $.

Finally, note that if
\[
\inf_{ \varphi \of{ \gamma } \neq 0 }{ \frac{ \log \of{ \lambda_{ k } \of{ \zeta \of{ \gamma } } } }{ \abs{ \varphi \of{ \gamma } } } } = \frac{ p + q }{ p q } ,
\]
then $ \rho = \lim_{ n \to \infty }{ \rho_{ p , q } \of{ \frac{ n + 1 }{ n } \varphi , \vect{ I }_{ p } , \zeta , \vect{ 0 } } } $ is a limit of representations which are not $ P_{ k } $-Anosov by the above argument, since
\[
\inf_{ \frac{ n + 1 }{ n } \varphi \of{ \gamma } \neq 0 }{ \frac{ \log \of{ \lambda_{ k } \of{ \zeta \of{ \gamma } } } }{ \abs{ \frac{ n + 1 }{ n } \varphi \of{ \gamma } } } } = \frac{ n }{ n + 1 } \inf_{ \varphi \of{ \gamma } \neq 0 }{ \frac{ \log \of{ \lambda_{ k } \of{ \zeta \of{ \gamma } } } }{ \abs{ \varphi \of{ \gamma } } } } < \frac{ p + q }{ p q } .
\]
\autoref{thm:stability} now implies that $ \rho $ is itself not $ P_{ k } $-Anosov, so that $ \varphi \notin A^{ p }_{ k } \of{ \zeta } $. \qedhere

\end{proof}

\Fuchsian*

\begin{proof}

This follows immediately from \autoref{thm:symmetricAnosov} and the facts that $ \lambda_{ k } \of{ \iota_{ q } \of{ \eta \of{ \gamma } } } = \lambda_{ 1 } \of{ \eta \of{ \gamma } }^{ q - 2 k + 1 } $ and $ \frac{ p + q }{ p q \of{ q - 2 k + 1 } } $ is strictly decreasing in $ p $ and $ q $ and strictly increasing in $ k $. \qedhere

\end{proof}

The remainder of this section is dedicated to the proof of \autoref{cor:bounds}. Recall that we have fixed a generating set $ \Sigma $, which induces a uniform norm $ \norm{ \oparg }_{ \infty } $ on $ \hom \of{ \Gamma , \R } $, and that for a $ P_{ k } $-symmetric representation $ \zeta \colon \Gamma \to \SL_{ q }^{ * } \of{ \K } $, $ s_{ k } \of{ \zeta } $ denotes the optimal slope for the linear growth of $ \log \of{ \lambda_{ k } \of{ \zeta \of{ \gamma } } } $ in terms of $ \norm{ \gamma }_{ \Sigma } $; that is,
\[
s_{ k } \of{ \zeta } = \sup \set{ a \in \opint{ 0 }{ \infty } : \textrm{There is some } b \geq 0 \textrm{ so that } \log \of{ \lambda_{ k } \of{ \zeta \of{ \gamma } } } \geq a \norm{ \gamma }_{ \Sigma } - b \textrm{ for all } \gamma \in \Gamma } .
\]

\begin{lemma+} \label{lem:bounds}

If $ \zeta $ is symmetric $ P_{ k } $-Anosov and $ \varphi $ is not identically zero, then
\[
\frac{ s_{ k } \of{ \zeta } }{ \norm{ \varphi }_{ \infty } } \leq \inf_{ \varphi \of{ \gamma } \neq 0 }{ \frac{ \log \of{ \lambda_{ k } \of{ \zeta \of{ \gamma } } } }{ \abs{ \varphi \of{ \gamma } } } } \leq \frac{ \max_{ \sigma \in \Sigma }{ \log \of{ \lambda_{ k } \of{ \zeta \of{ \sigma } } } } }{ \norm{ \varphi }_{ \infty } } .
\]

\begin{proof}

For the upper bound, note that there is a generator $ \sigma_{ * } \in \Sigma $ so that $ \abs{ \varphi \of{ \sigma_{ * } } } = \norm{ \varphi }_{ \infty } > 0 $, which implies that
\[
\inf_{ \varphi \of{ \gamma } \neq 0 }{ \frac{ \log \of{ \lambda_{ k } \of{ \zeta \of{ \gamma } } } }{ \abs{ \varphi \of{ \gamma } } } } \leq \frac{ \log \of{ \lambda_{ k } \of{ \zeta \of{ \sigma_{ * } } } } }{ \abs{ \varphi \of{ \sigma_{ * } } } } = \frac{ \log \of{ \lambda_{ k } \of{ \zeta \of{ \sigma_{ * } } } } }{ \norm{ \varphi }_{ \infty } } \leq \frac{ \max_{ \sigma \in \Sigma }{ \log \of{ \lambda_{ k } \of{ \zeta \of{ \sigma } } } } }{ \norm{ \varphi }_{ \infty } } .
\]
For the lower bound, let $ 0 < a < s_{ k } \of{ \zeta } $, so that there is a constant $ b \geq 0 $ so that $ \log \of{ \lambda_{ k } \of{ \zeta \of{ \gamma } } } \geq a \norm{ \gamma }_{ \Sigma } - b $ for all $ \gamma \in \Gamma $. We claim that there is a sequence $ \of{ \gamma_{ n } }_{ n = 0 }^{ \infty } $ in $ \Gamma \setminus \ker \of{ \varphi } $ so that
\begin{align} 
\lim_{ n \to \infty }{ \frac{ \log \of{ \lambda_{ k } \of{ \zeta \of{ \gamma_{ n } } } } }{ \abs{ \varphi \of{ \gamma_{ n } } } } } & = \inf_{ \varphi \of{ \gamma } \neq 0 }{ \frac{ \log \of{ \lambda_{ k } \of{ \zeta \of{ \gamma } } } }{ \abs{ \varphi \of{ \gamma } } } } & \lim_{ n \to \infty }{ \norm{ \gamma_{ n } }_{ \Sigma } } & = \infty . \tag{$ \ast $}
\end{align}
Indeed, if such a sequence exists, then
\begin{align*}
\inf_{ \varphi \of{ \gamma } \neq 0 }{ \frac{ \log \of{ \lambda_{ k } \of{ \zeta \of{ \gamma } } } }{ \abs{ \varphi \of{ \gamma } } } } & = \lim_{ n \to \infty }{ \frac{ \log \of{ \lambda_{ k } \of{ \zeta \of{ \gamma_{ n } } } } }{ \abs{ \varphi \of{ \gamma_{ n } } } } } \geq \liminf_{ n \to \infty }{ \frac{ a \norm{ \gamma_{ n } }_{ \Sigma } - b }{ \norm{ \varphi }_{ \infty } \cdot \norm{ \gamma_{ n } }_{ \Sigma } } } \\
& \geq \liminf_{ x \to \infty }{ \frac{ a x - b }{ \norm{ \varphi }_{ \infty } x } } = \frac{ a }{ \norm{ \varphi }_{ \infty } }
\end{align*}
Since this holds for all $ 0 < a < s_{ k } \of{ \zeta } $,
\[
\inf_{ \varphi \of{ \gamma } \neq 0 }{ \frac{ \log \of{ \lambda_{ k } \of{ \zeta \of{ \gamma } } } }{ \abs{ \varphi \of{ \gamma } } } } \geq \frac{ s_{ k } \of{ \zeta } }{ \norm{ \varphi }_{ \infty } } .
\]
Thus it suffices to exhibit the existence of a sequence with the properties $ \of{ \ast } $. To that end, note that if
\[
\frac{ \log \of{ \lambda_{ k } \of{ \zeta \of{ \gamma_{ * } } } } }{ \abs{ \varphi \of{ \gamma_{ * } } } } = \inf_{ \varphi \of{ \gamma } \neq 0 }{ \frac{ \log \of{ \lambda_{ k } \of{ \zeta \of{ \gamma } } } }{ \abs{ \varphi \of{ \gamma } } } }
\]
for some $ \gamma_{ * } \notin \ker \of{ \varphi } $, then we may take $ \gamma_{ n } = \gamma_{ * }^{ n } $, since
\[
\lim_{ n \to \infty }{ \frac{ \log \of{ \lambda_{ k } \of{ \zeta \of{ \gamma_{ * }^{ n } } } } }{ \abs{ \varphi \of{ \gamma_{ * }^{ n } } } } } = \lim_{ n \to \infty }{ \frac{ \abs{ n } \cdot \log \of{ \lambda_{ k } \of{ \zeta \of{ \gamma_{ * } } } } }{ \abs{ n } \cdot \abs{ \varphi \of{ \gamma_{ * } } } } } = \frac{ \log \of{ \lambda_{ k } \of{ \zeta \of{ \gamma_{ * } } } } }{ \abs{ \varphi \of{ \gamma_{ * } } } } = \inf_{ \varphi \of{ \gamma } \neq 0 }{ \frac{ \log \of{ \lambda_{ k } \of{ \zeta \of{ \gamma } } } }{ \abs{ \varphi \of{ \gamma } } } } .
\]
Moreover, such a $ \gamma_{ * } $ has infinite order in $ \Gamma $, since $ \R $ is torsion-free and hence all finite order elements are in $ \ker \of{ \varphi } $. This implies that $ \lim_{ n \to \infty }{ \norm{ \gamma_{ * }^{ n } }_{ \Sigma } } = \infty $.

Otherwise, if no such $ \gamma_{ * } $ exists, then any sequence $ \of{ \gamma_{ n } }_{ n = 0 }^{ \infty } $ in $ \Gamma \setminus \ker \of{ \varphi } $ with
\[
\lim_{ n \to \infty }{ \frac{ \log \of{ \lambda_{ k } \of{ \zeta \of{ \gamma_{ n } } } } }{ \abs{ \varphi \of{ \gamma_{ n } } } } } = \inf_{ \varphi \of{ \gamma } \neq 0 }{ \frac{ \log \of{ \lambda_{ k } \of{ \zeta \of{ \gamma } } } }{ \abs{ \varphi \of{ \gamma } } } }
\]
has a subsequence (which we will also denote $ \of{ \gamma_{ n } }_{ n = 0 }^{ \infty } $) so that $ \of{ \frac{ \log \of{ \lambda_{ k } \of{ \zeta \of{ \gamma_{ n } } } } }{ \abs{ \varphi \of{ \gamma_{ n } } } } }_{ n = 0 }^{ \infty } $ is a sequence of pairwise distinct real numbers. Since $ \frac{ \log \of{ \lambda_{ k } \of{ \zeta \of{ \gamma } } } }{ \abs{ \varphi \of{ \gamma } } } $ is a function of the conjugacy class of $ \gamma $, this implies that $ \gamma_{ m } $ and $ \gamma_{ n } $ are not conjugate whenever $ m \neq n $, and so $ \lim_{ n \to \infty }{ \norm{ \gamma_{ n } }_{ \Sigma } } = \infty $. \qedhere

\end{proof}

\end{lemma+}

We are now ready to prove \autoref{cor:bounds}, which we restate below for the reader's convenience. The proof follows immediately from \autoref{cor:Fuchsian} as well as the above bounds in \autoref{lem:bounds}.

\bounds*

\begin{proof}

Let $ r^{ - } = s_{ k } \of{ \zeta } $ and $ r^{ + } = \max_{ \sigma \in \Sigma }{ \log \of{ \lambda_{ k } \of{ \zeta \of{ \sigma } } } } $. Note that \autoref{thm:symmetricAnosov} implies that $ 0 \in A^{ p }_{ k } \of{ \zeta } $, so we may assume that $ \varphi $ is not identically zero. In this case, \autoref{lem:bounds} gives
\[
\frac{ r^{ - } }{ \norm{ \varphi }_{ \infty } } \leq \inf_{ \varphi \of{ \gamma } \neq 0 }{ \frac{ \log \of{ \lambda_{ k } \of{ \zeta \of{ \gamma } } } }{ \abs{ \varphi \of{ \gamma } } } } \leq \frac{ r^{ + } }{ \norm{ \varphi }_{ \infty } } .
\]
If $ \norm{ \varphi }_{ \infty } < \frac{ p q r^{ - } }{ p + q } $, then
\[
\inf_{ \varphi \of{ \gamma } \neq 0 }{ \frac{ \log \of{ \lambda_{ k } \of{ \zeta \of{ \gamma } } } }{ \abs{ \varphi \of{ \gamma } } } } \geq \frac{ r^{ - } }{ \norm{ \varphi }_{ \infty } } > \frac{ r^{ - } \of{ p + q } }{ p q r^{ - } } = \frac{ p + q }{ p q } ,
\]
and so $ \varphi \in A^{ p }_{ k } \of{ \zeta } $ by \autoref{thm:symmetricAnosov}. Similarly, if $ \norm{ \varphi }_{ \infty } \geq \frac{ p q r^{ + } }{ p + q } $, then
\[
\inf_{ \varphi \of{ \gamma } \neq 0 }{ \frac{ \log \of{ \lambda_{ k } \of{ \zeta \of{ \gamma } } } }{ \abs{ \varphi \of{ \gamma } } } } \leq \frac{ r^{ + } }{ \norm{ \varphi }_{ \infty } } \leq \frac{ r^{ + } \of{ p + q } }{ p q r^{ + } } = \frac{ p + q }{ p q } ,
\]
and so $ \varphi \notin A^{ p }_{ k } \of{ \zeta } $ by \autoref{thm:symmetricAnosov}. \qedhere

\end{proof}

\section{Non-Anosov Limits of Anosov Reducible Suspensions} \label{sec:limits}

Consider an Anosov representation $ \eta \colon \Gamma \to \SL_{ 2 }^{ * } \of{ \K } $. In this section, we will investigate the boundaries of the convex domains $ A^{ p }_{ k } \of{ \iota_{ q } \circ \eta } $ in $ \hom \of{ \Gamma , \R } $. \autoref{cor:Fuchsian} implies that for a fixed $ p $ and $ q $, these domains are strictly nested, and so we are able to find continuous deformations of Borel Anosov representations of $ \Gamma $ along which the Anosov conditions fail one by one. This is the content of \autoref{cor:deformations}, which we restate below for the reader's convenience.

\deformations*

\begin{proof}

Let $ p = 1 $ and $ q = d - 1 $, and fix an Anosov representation $ \eta \colon \Gamma \to \SL_{ 2 }^{ * } \of{ \K } $ and a non-zero homomorphism $ \varphi \colon \Gamma \to \R $. Let
\[
a = \inf_{ \varphi \of{ \gamma } \neq 0 }{ \frac{ \log \of{ \lambda_{ 1 } \of{ \eta \of{ \gamma } } } }{ \abs{ \varphi \of{ \gamma } } } } .
\]
Consider for each $ 0 \leq t < \infty $ the reducible suspension $ \rho_{ t } = \rho_{ p , q } \of{ t \varphi , 1 , \iota_{ q } \circ \eta , \vect{ 0 } } $, so that $ t \mapsto \rho_{ t } $ is a continuous path in $ \hom \of{ \Gamma , \SL_{ d }^{ * } \of{ \K } } $. That $ \rho_{ 0 } $ is Borel Anosov follows immediately from \autoref{thm:symmetricAnosov} and the fact that
\[
\floor{ \frac{ q }{ 2 } } = \floor{ \frac{ 1 + q }{ 2 } } = \floor{ \frac{ d }{ 2 } } ,
\]
since $ q $ is even. On the other hand,
\[
\inf_{ t \varphi \of{ \gamma } \neq 0 }{ \frac{ \log \of{ \lambda_{ 1 } \of{ \eta \of{ \gamma } } } }{ \abs{ t \varphi \of{ \gamma } } } } = \frac{ 1 }{ t } \inf_{ \varphi \of{ \gamma } \neq 0 }{ \frac{ \log \of{ \lambda_{ 1 } \of{ \eta \of{ \gamma } } } }{ \abs{ \varphi \of{ \gamma } } } } = \frac{ a }{ t }
\]
for all $ 1 \leq t < \infty $. By \autoref{cor:Fuchsian}, when
\[
\frac{ a p q \of{ q - 2 j - 1 } }{ p + q } = \frac{ a \of{ d - 1 } \of{ d - 2 j - 2 } }{ d } \leq t < \frac{ a \of{ d - 1 } \of{ d - 2 j } }{ d } = \frac{ a p q \of{ q - 2 j + 1 } }{ p + q } ,
\]
$ \rho_{ t } $ is $ P_{ k } $-Anosov for all integers $ 1 \leq k \leq j $ but not $ P_{ k } $-Anosov for any integer $ j < k \leq \frac{ d }{ 2 } $. \qedhere

\end{proof}

\autoref{prop:eigenvalues} implies that if $ d $ is even, then no reducible suspension in $ \SL_{ d }^{ * } \of{ \K } $ can be Borel Anosov. Thus the conclusions of \autoref{cor:deformations} cannot be true if $ d $ is even. However, if we call a representation $ \Gamma \to \SL_{ d } \of{ \K } $ \emph{almost Borel Anosov} if it is $ P_{ k } $-Anosov for all integers $ 1 \leq k \leq \frac{ d - 2 }{ 2 } $, then there is an analogous result in this case which follows from an almost identical proof.

\begin{corollary+}

If $ \Gamma $ admits an Anosov representation in $ \SL_{ 2 }^{ * } \of{ \K } $ and $ d \geq 2 $ is even, then for all integers $ 1 \leq j \leq \frac{ d - 2 }{ 2 } $, there are continuous deformations of an almost Borel Anosov representation in $ \SL_{ d }^{ * } \of{ \K } $ which are $ P_{ k } $-Anosov for all integers $ 1 \leq k \leq j $ but not $ P_{ k } $-Anosov for any integer $ j < k \leq \frac{ d }{ 2 } $.

\end{corollary+}

For the remainder of this article, we now restrict our attention to the case in which $ \Gamma $ is a closed orientable surface group of genus $ g \geq 2 $ and $ \K = \R $. In this case, (the conjugacy class of the projectivization of) a discrete and faithful representation $ \eta \colon \Gamma \to \SL_{ 2 }^{ \pm } \of{ \R } $ (indeed, the image of any such representation lies in $ \SL_{ 2 } \of{ \R } $) yields a closed hyperbolic surface $ X $ and an explicit identification of $ \Gamma $ with the fundamental group $ \pi_{ 1 } \of{ X } $. In this section, we will explicitly construct points on the boundary of $ A^{ p }_{ k } \of{ \iota_{ q } \circ \eta } $ corresponding to particularly short non-separating geodesics on $ X $. Recall that we require $ 1 \leq k \leq \frac{ q }{ 2 } $ so that $ A^{ p }_{ k } \of{ \iota_{ q } \circ \eta } $ is non-empty.

Specifically, fix a simple non-separating element $ \alpha_{ 1 } \in \Gamma $. There are other simple non-separating elements $ \alpha_{ 2 } , \dotsc , \alpha_{ g } , \beta_{ 1 } , \dotsc , \beta_{ g } \in \Gamma $ so that
\[
\Gamma = \gen{ \alpha_{ 1 } , \dotsc , \alpha_{ g } , \beta_{ 1 } , \dotsc , \beta_{ g } \mid \comm{ \alpha_{ 1 } }{ \beta_{ 1 } } \dotsm \comm{ \alpha_{ g } }{ \beta_{ g } } } .
\]
Consider the homomorphism $ \varphi \colon \Gamma \to \R $ defined by the conditions
\begin{align*}
\varphi \of{ \alpha_{ 1 } } & = 1 & \varphi \of{ \alpha_{ 2 } } & = \dotsb = \varphi \of{ \alpha_{ g } } = \varphi \of{ \beta_{ 1 } } = \dotsb = \varphi \of{ \beta_{ g } } = 0 .
\end{align*}
In particular, note that $ \varphi \of{ \gamma } = \inner{ \class{ \beta_{ 1 } } }{ \class{ \gamma } } $ for all $ \gamma \in \Gamma $, where $ \inner{ \oparg }{ \oparg } $ is the algebraic intersection number. Note that here and in what follows, we will denote by $ \class{ \gamma } $ the closed geodesic on $ X $ represented by an element $ \gamma \in \Gamma $.

$ \R \varphi $ defines a line in $ \hom \of{ \Gamma , \R } $ through the origin, and so crosses $ \partial A^{ p }_{ k } \of{ \iota_{ q } \circ \eta } $ (which is the boundary of a non-empty bounded convex domain containing the origin) in precisely two points.

\begin{proposition+} \label{prop:boundary}

If $ \class{ \alpha_{ 1 } } $ is the shortest homotopically non-trivial geodesic arc from $ \class{ \beta_{ 1 } } $ to itself, then $ t \varphi \in \partial A^{ p }_{ k } \of{ \iota_{ q } \circ \eta } $ precisely when
\[
\abs{ t } = \frac{ p q \of{ q - 2 k + 1 } }{ p + q } \cdot \len_{ X } \of{ \class{ \alpha_{ 1 } } } .
\]

\begin{proof}

First, note that if $ \inf_{ \varphi \of{ \gamma } \neq 0 }{ \frac{ \len_{ X } \of{ \class{ \gamma } } }{ \abs{ \inner{ \class{ \beta_{ 1 } } }{ \class{ \gamma } } } } } = \len_{ X } \of{ \class{ \alpha_{ 1 } } } $, then \autoref{cor:Fuchsian} implies that $ t \varphi \in A^{ p }_{ k } \of{ \iota_{ q } \circ \eta } $ if and only if
\[
\frac{ \len_{ X } \of{ \class{ \alpha_{ 1 } } } }{ \abs{ t } } = \frac{ 1 }{ \abs{ t } } \inf_{ \varphi \of{ \gamma } \neq 0 }{ \frac{ \len_{ X } \of{ \class{ \gamma } } }{ \abs{ \inner{ \class{ \beta_{ 1 } } }{ \class{ \gamma } } } } } = \inf_{ t \varphi \of{ \gamma } \neq 0 }{ \frac{ \log \of{ \lambda_{ 1 } \of{ \eta \of{ \gamma } } } }{ \abs{ t \varphi \of{ \gamma } } } } > \frac{ p + q }{ p q \of{ q - 2 k + 1 } } .
\]
Since $ \frac{ \len_{ X } \of{ \class{ \alpha_{ 1 } } } }{ \abs{ t } } $ is a continuous and monotone function of $ t $, we may conclude that $ t \varphi \in \partial A^{ p }_{ k } \of{ \iota_{ q } \circ \eta } $ if and only if
\[
\abs{ t } = \frac{ p q \of{ q - 2 k + 1 } }{ p + q } \cdot \len_{ X } \of{ \class{ \alpha_{ 1 } } } .
\]
Thus it suffices to show $ \inf_{ \varphi \of{ \gamma } \neq 0 }{ \frac{ \len_{ X } \of{ \class{ \gamma } } }{ \abs{ \inner{ \class{ \beta_{ 1 } } }{ \class{ \gamma } } } } } = \len_{ X } \of{ \class{ \alpha_{ 1 } } } $. To that end, first note that
\[
\inf_{ \varphi \of{ \gamma } \neq 0 }{ \frac{ \len_{ X } \of{ \class{ \gamma } } }{ \abs{ \inner{ \class{ \beta_{ 1 } } }{ \class{ \gamma } } } } } \leq \frac{ \len_{ X } \of{ \class{ \alpha_{ 1 } } } }{ \abs{ \inner{ \class{ \beta_{ 1 } } }{ \class{ \alpha_{ 1 } } } } } = \len_{ X } \of{ \class{ \alpha_{ 1 } } } .
\]
On the other hand, fix $ \gamma \notin \ker \of{ \varphi } $. In particular, the geometric intersection number $ i \of{ \class{ \beta_{ 1 } } , \class{ \gamma } } \neq 0 $. Note that the points where $ \class{ \beta_{ 1 } } $ and $ \class{ \gamma } $ intersect divide $ \class{ \gamma } $ into $ i \of{ \class{ \beta_{ 1 } } , \class{ \gamma } } $ geodesic arcs from $ \class{ \beta_{ 1 } } $ to itself, which each have length at least $ \len_{ X } \of{ \class{ \alpha_{ 1 } } } $ by assumption. Thus
\[
\frac{ \len_{ X } \of{ \class{ \gamma } } }{ \abs{ \inner{ \class{ \beta_{ 1 } } }{ \class{ \gamma } } } } \geq \frac{ i \of{ \class{ \beta_{ 1 } } , \class{ \gamma } } \cdot \len_{ X } \of{ \class{ \alpha_{ 1 } } } }{ i \of{ \class{ \beta_{ 1 } } , \class{ \gamma } } } = \len_{ X } \of{ \class{ \alpha_{ 1 } } } .
\]
Since this holds for all $ \gamma \notin \ker \of{ \varphi } $, $ \inf_{ \varphi \of{ \gamma } \neq 0 }{ \frac{ \len_{ X } \of{ \class{ \gamma } } }{ \abs{ \inner{ \class{ \beta_{ 1 } } }{ \class{ \gamma } } } } } = \len_{ X } \of{ \class{ \alpha_{ 1 } } } $. \qedhere

\end{proof}

\end{proposition+}

We note that one may obtain a representation $ \eta \colon \Gamma \to \SL_{ 2 } \of{ \R } $ so that $ \alpha_{ 1 } $ satisfies the hypotheses of \autoref{prop:boundary} by pinching $ \class{ \alpha_{ 1 } } $ until it is very short.

\printbibliography

\end{document}